\newtheorem{theorem}{Theorem}[section]
\newtheorem{lemma}[theorem]{Lemma}
\newtheorem{proposition}[theorem]{Proposition}
\theoremstyle{definition}
\theoremstyle{remark}\newtheorem{Rem}{Remark}
\numberwithin{equation}{section}
\begin{document}

\title[On Kurzweil's 0-1 Law]{On Kurzweil's 0-1 Law in Inhomogeneous Diophantine Approximation}
\author{Michael Fuchs}
\address{Department of Applied Mathematics, National Chiao Tung University, Hsinchu 300, Taiwan}
\email{mfuchs@math.nctu.edu.tw}

\author{Dong Han KIM}
\address{Department of Mathematics Education, Dongguk University -- Seoul, Seoul 100-715, Korea}
\email{kim2010@dongguk.edu}

%\thanks{}

\begin{abstract}
We give a sufficient and necessary condition such that for almost all $s\in{\mathbb R}$
\[
\|n\theta-s\|<\psi(n)\qquad\text{for infinitely many}\ n\in{\mathbb N},
\]
where $\theta$ is fixed and $\psi(n)$ is a positive, non-increasing sequence.
This improves upon an old result of Kurzweil and contains several previous results as special cases: two theorems of Kurzweil, a theorem of Tseng and a recent result of the second author.
Moreover, we also discuss an analogue of our result in the field of formal Laurent series which has similar consequences.
\end{abstract}

\keywords{Metric inhomogeneous Diophantine approximation; $0$-$1$ law; irrational rotation; shrinking target property; formal Laurent series}
\subjclass[2010]{11J83, 11K60, 37E10}

\maketitle

\section{Introduction and Results}

This paper is concerned with metric inhomogeneous Diophantine approximation. More precisely, we consider the inhomogeneous Diophantine approximation problem
\begin{equation}\label{idap}
\|n\theta-s\|<\psi(n)
\end{equation}
whose number of solutions in $n\in{\mathbb N}$ is sought. Here and throughout this paper, $\theta,s\in{\mathbb R}$, $\|\cdot\|$ denotes the distance to the nearest integer and $\psi(n)$ is a (fixed) positive, non-increasing sequence which is called {\it approximation sequence}. In addition, we will sometimes assume that $\psi(n)$ is a {\it Khintchine sequence} which means that $n\psi(n)$ is non-increasing.

There are two different ways of looking at (\ref{idap}): (i) $s$ is fixed and one is interested in the number of solutions for almost all $\theta$ (with respect to the Lebesgue measure), or (ii) $\theta$ is fixed and one is interested in the number of solutions for almost all $s$. Alternatively, one can also consider the number of solutions for almost all $(\theta,s)$ (with respect to the two-dimensional Lebesgue measure). However, we will not consider this ``double-metric" situation in this paper.

First, we recall what is known for the first case where $s$ is fixed. Here, it was proved by Khintchine \cite{Khintchine} for Khintchine sequences and $s=0$ (homogeneous Diophantine approximation) that (\ref{idap}) has either finitely many solutions in $n\in{\mathbb N}$ for almost all $\theta$ or infinitely many solutions in $n\in{\mathbb N}$ for almost all $\theta$ with the latter happening if and only if
\[
\sum_{n\geq 1}\psi(n)=\infty.
\]
This result was extended to general $s$ (inhomogeneous Diophantine approximation) by Sz\"{u}sz \cite{Szusz}. Another extension was given by Schmidt \cite{Schmidt} whose (very general) result in particular implies that the previous results of Khintchine and Sz\"{u}sz hold for all non-increasing approximation sequences. This line of research was then extended in many different directions; see the monograph \cite{Harman}.

The second case where $\theta$ is fixed was considerably less studied.
Here, in his pioneering work, Kurzweil \cite{Kurzweil} showed that also a $0$-$1$ law holds.
\begin{theorem}[Kurzweil's $0$-$1$ Law \cite{Kurzweil}]
Let $\psi(n)$ be a positive, non-increasing sequence and $\theta$ be an irrational number. Then,
\begin{equation}\label{inf-sol}
\|n\theta-s\|<\psi(n)\qquad\text{for infinitely many}\ n\in{\mathbb N}
\end{equation}
either for almost all $s$ or for almost no $s$.
\end{theorem}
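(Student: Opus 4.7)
My plan is to exploit the ergodicity of the rotation $T\colon s \mapsto s+\theta \pmod 1$ on the circle $[0,1)$ equipped with Lebesgue measure $\lambda$; this ergodicity holds precisely because $\theta$ is irrational. Set
\[
A = \{s \in [0,1) : \|n\theta-s\|<\psi(n) \text{ for infinitely many } n\in{\mathbb N}\} = \limsup_{n\to\infty}E_n,
\]
where $E_n = \{s\in[0,1) : \|n\theta-s\|<\psi(n)\}$ is an open arc of length $2\psi(n)$; in particular $A$ is Borel. Once I show that $A$ agrees with $T(A)$ modulo a $\lambda$-null set, ergodicity of $T$ will force $\lambda(A)\in\{0,1\}$, which is exactly the asserted dichotomy.

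The heart of the argument is therefore the verification of essential $T$-invariance, and this is where the non-increasing hypothesis on $\psi$ enters. I claim that $A\subset T(A)$, equivalently $s\in A$ implies $s-\theta\in A$. Indeed, if $s\in A$ is witnessed by a sequence $n_k\to\infty$ with $\|n_k\theta-s\|<\psi(n_k)$, then taking $m_k = n_k-1$ gives
\[
\|m_k\theta-(s-\theta)\| = \|n_k\theta-s\| < \psi(n_k) \leq \psi(n_k-1) = \psi(m_k),
\]
so $s-\theta\in A$. Because $T$ preserves $\lambda$, the inclusion $A\subset T(A)$ combined with $\lambda(T(A)) = \lambda(A)$ yields $\lambda(T(A)\setminus A)=0$, which is precisely essential $T$-invariance of $A$.

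The final step is standard: the irrationality of $\theta$ implies ergodicity of $T$ on $([0,1),\lambda)$ (provable via Fourier series on the circle, or as a consequence of Weyl's equidistribution theorem), so every essentially $T$-invariant Borel set has Lebesgue measure $0$ or $1$. Applying this to $A$ completes the proof.

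I expect the only subtle point to be the one-sided nature of the invariance. A symmetric attempt to prove $T(A)\subset A$ via the substitution $m = n_k+1$ would demand $\psi(n_k+1)\geq \psi(n_k)$, i.e., monotonicity in the wrong direction, and would therefore fail. The argument is rescued by the fact that $T$ preserves $\lambda$, so a single one-sided inclusion together with measure-preservation is already enough to force equality modulo null sets; the monotonicity of $\psi$ is used exactly once, and in exactly the natural direction.
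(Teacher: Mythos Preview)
Your argument is correct. The set $A=\limsup_n E_n$ is Borel, the inclusion $A\subset T(A)$ follows exactly as you show from the monotonicity of $\psi$, and together with $\lambda(T(A))=\lambda(A)$ this gives essential $T$-invariance; ergodicity of the irrational rotation then forces $\lambda(A)\in\{0,1\}$. The one-sided nature of the inclusion is indeed harmless for the reason you give.

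As for the comparison: the paper does not supply its own proof of this statement. Kurzweil's $0$--$1$ law is quoted with attribution to \cite{Kurzweil}, and the paper's contribution is the stronger Theorem~\ref{main-thm}, which pins down \emph{which} alternative occurs via the continued-fraction condition~(\ref{nec-suf-cond}). That theorem of course implies the $0$--$1$ law a posteriori, but its proof (Denjoy--Koksma, a quantitative Borel--Cantelli lemma, careful bookkeeping with the convergents $q_k$) is a far heavier route to the bare dichotomy than your two-line ergodicity argument. What your approach buys is brevity and conceptual clarity for the $0$--$1$ statement itself; what it does not buy is any information about which of the two cases holds, which is precisely the content the paper is after.
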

It is an immediate consequence of the lemma of Borel-Cantelli that $\sum_{n\geq 1}\psi(n)=\infty$ is a necessary condition for (\ref{inf-sol}). Thus, it is natural to ask when this is also sufficient. The answer to this question was also given by Kurzweil in \cite{Kurzweil} where he showed that the above condition is sufficient and necessary exactly for the set of badly approximable $\theta$ (Kurzweil's theorem). It seems that his paper was forgotten for a long time. However, in recent years, there was a revival of interest in his study with many follow-up papers; see for instance \cite{Fayad}, \cite{Kim}, \cite{Tseng}.

The main goal of this paper is to give a necessary and sufficient condition for (\ref{inf-sol}) which holds for all $\theta$ (not only badly approximable $\theta$). For Khintchine sequences such a result was already proved in \cite{Kim}. Here, we will extend the result from \cite{Kim} to all positive, non-increasing sequences $\psi(n)$. Not surprisingly, our condition will involve Diophantine approximation properties of $\theta$. More precisely, our result reads as follows.

\begin{theorem}\label{main-thm}
Let $\psi(n)$ be a positive, non-increasing sequence and $\theta$ be an irrational number with principal convergents $p_k/q_k$. Then, for almost all $s \in \mathbb R$,
\[
\|n\theta-s\|<\psi(n)\quad\text{for infinitely many}\ n\in{\mathbb N}
\]
if and only if
\begin{equation}\label{nec-suf-cond}
\sum_{k=0}^{\infty}\left(\sum_{n=q_k}^{q_{k+1}-1}\min(\psi(n),\| q_k\theta\|)\right)=\infty.
\end{equation}
\end{theorem}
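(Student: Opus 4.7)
Set $A_n = \{s \in [0,1) : \|n\theta - s\| < \psi(n)\}$, an arc of length at most $2\psi(n)$ centered at $\{n\theta\}$, and write $S = \limsup_n A_n$ for the set of $s$ satisfying \eqref{inf-sol}. Kurzweil's $0$-$1$ law already gives $|S| \in \{0,1\}$, so the task is to decide which. The plan is to group the indices into the Ostrowski blocks $B_k = [q_k,q_{k+1}) \cap \mathbb{N}$ and work with the block events $E_k := \bigcup_{n \in B_k} A_n$. The first step will be to prove the geometric estimate
$$|E_k| \asymp \sum_{n=q_k}^{q_{k+1}-1} \min\bigl(\psi(n),\, \|q_k\theta\|\bigr).$$
This is the natural quantity because the three-distance theorem implies that the points $\{n\theta \bmod 1\}_{n \in B_k}$ are pairwise separated by at least $\|q_k\theta\|$, so each arc $A_n$ contributes roughly $\min(2\psi(n), 2\|q_k\theta\|)$ to the union -- it gets trimmed by its neighbors once $\psi(n) \ge \|q_k\theta\|$.

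\textbf{Convergence direction.} If the series \eqref{nec-suf-cond} converges, the geometric estimate yields $\sum_k |E_k| < \infty$; the one-sided Borel-Cantelli lemma then forces almost every $s$ to lie in only finitely many $E_k$, hence in only finitely many $A_n$, so $|S|=0$.

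\textbf{Divergence direction.} Now assume \eqref{nec-suf-cond} diverges, so $\sum_k |E_k| = \infty$. Since $S \supset \limsup_k E_k$, the $0$-$1$ law reduces the problem to showing $|\limsup_k E_k|>0$. I would apply the Chung--Erd\H{o}s / Kochen--Stone inequality
$$\bigl|\limsup_k E_k\bigr| \,\ge\, \limsup_N \frac{\bigl(\sum_{k\le N} |E_k|\bigr)^2}{\sum_{j,k\le N} |E_j \cap E_k|},$$
after establishing a quasi-independence bound of the shape $|E_j \cap E_k| \le C\,|E_j|\,|E_k|$ for $j<k$ with an absolute constant $C$. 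Combined with the first-moment divergence, this delivers $|\limsup_k E_k|\ge 1/(2C)$, and the $0$-$1$ law upgrades it to $|S|=1$.

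\textbf{Main obstacle.} The quasi-independence estimate is the technical heart of the proof. For $j<k$ the set $E_j$ is a coarse union of arcs at scale $\|q_j\theta\|$ while $E_k$ is a finer union at scale $\|q_k\theta\| \ll \|q_j\theta\|$; the bound amounts to saying that the level-$k$ orbit $\{n\theta\}_{n \in B_k}$ is sufficiently equidistributed at scale $\|q_j\theta\|$ that each connected arc of $E_j$ receives its proportional share of level-$k$ mass. Unlike the Khintchine setting of \cite{Kim}, where $n\psi(n)$ being non-increasing caps each block's contribution and makes the counting manageable, here $\psi(n)$ may be much larger than $1/n$, forcing a more refined count of the $n \in B_k$ with $\{n\theta\}$ inside a prescribed arc. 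I expect this counting -- carried out via the three-distance partition together with the Ostrowski representation of integers in $B_k$ -- to be the crux of the argument.
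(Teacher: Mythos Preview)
Your high-level plan---grouping indices into the Ostrowski blocks $B_k=[q_k,q_{k+1})$, using the first Borel--Cantelli lemma on the block events for the convergence half, and a second-moment argument for the divergence half---is exactly the skeleton of the paper's proof. Two of your technical claims, however, do not hold as stated and are where the real work lies.

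First, the two-sided estimate $|E_k|\asymp\sum_{n\in B_k}\min(\psi(n),\|q_k\theta\|)$ fails in the upward direction: if $\psi$ is large on the first $q_k$ indices of $B_k$ and tiny thereafter, $|E_k|$ can be close to $1$ while the right side is of order $q_k\|q_k\theta\|\asymp q_k/q_{k+1}$, which may be arbitrarily small. The paper handles the convergence half by first disposing of the case $\psi(q_{k+1}-1)\ge\|q_k\theta\|$ infinitely often (this already forces divergence of \eqref{nec-suf-cond}), and then, via the telescoping $B(n\theta,\psi(n))\setminus B((n-q_k)\theta,\psi(n-q_k))$, bounding $\mu(E_{k+1})$ by the $k$-th block sum \emph{plus contributions from blocks $k-2$ and $k-1$}; only after summing over $k$ does one recover $\sum_k\mu(E_k)<\infty$.

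Second, and more seriously, the quasi-independence estimate $|E_j\cap E_k|\le C\,|E_j|\,|E_k|$ with an absolute $C$ is not what equidistribution of $\{n\theta\}$ delivers. The clean tool here is the Denjoy--Koksma inequality: for an orbit segment of exact length $q_k$ and any interval $I$, the number of hits is $q_k|I|+O(1)$. Since $E_j$ may have up to $q_{j+1}$ components, the accumulated error is $O(q_{j+1})$ points, and what one actually obtains is
\[
\mu(G_j\cap G_k)\ \le\ \mu(G_j)\,\mu(G_k)\ +\ \frac{3q_{j+1}}{q_k}\,\mu(G_k)\ \le\ \mu(G_j)\,\mu(G_k)\ +\ 6\cdot 2^{-(k-j)/2}\,\mu(G_k),
\]
an \emph{additive} error that decays geometrically in $k-j$ but is not absorbable into $C\,\mu(G_j)\,\mu(G_k)$ when $\mu(G_j)$ is small and $j$ is close to $k$. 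To make this computation clean the paper replaces $E_k$ by a truncated union $G_k=\bigcup_i G_{k,i}$, where each $G_{k,i}$ consists of \emph{disjoint} balls of radius $\tfrac12\min(\psi(\cdot),\|q_k\theta\|)$ centred at exactly $q_k$ consecutive orbit points (so Denjoy--Koksma applies directly), and verifies separately that $\sum_k\mu(G_k)=\infty$ follows from \eqref{nec-suf-cond}. The additive-error overlap bound is then fed into a Sprind\v{z}uk-type variance lemma rather than the bare Chung--Erd\H{o}s inequality; your Chung--Erd\H{o}s route would also work once you accept the additive error, since $\sum_{j<k\le N}2^{-(k-j)/2}\mu(G_k)=O\bigl(\sum_{k\le N}\mu(G_k)\bigr)$ and the resulting ratio tends to $1$.

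In short, the two missing ingredients are the truncation to disjoint-ball sets $G_k$ built from length-$q_k$ orbit segments, and the correct \emph{additive}-error form of the overlap bound via Denjoy--Koksma; three-distance/Ostrowski combinatorics could substitute for the latter but is heavier than needed here.
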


This result contains several previous results as special cases: the above mentioned theorem of Kurzweil as well as its extensions given in his paper \cite{Kurzweil} and by Tseng \cite{Tseng}. We recall these results and show that our result implies them below.

Analogues of all the above results were also obtained in the field of formal Laurent series over a finite base field; see \cite{ChenFuchs}, \cite{KimNakada}, \cite{KiNaNa}, \cite{KiTaWaXu}, \cite{Lin}. Indeed, an analogue of our Theorem \ref{main-thm} also holds in this situation, again implying many previous results. This will be shown below as well.

We conclude the introduction by giving a short sketch of the paper. In the next section, we will prove our main result. In Section \ref{Ku-Ts}, we will show that our result implies the previous results of Tseng and Kurzweil (which will also be recalled in this section). In Section \ref{Kim}, we will consider Khintchine sequences $\psi(n)$ and show that in this case (\ref{nec-suf-cond}) is equivalent to the condition from the main result of \cite{Kim}. Finally, in Section \ref{formal-Laur}, we will discuss an analogue of our result in the field of formal Laurent series over a finite field (whose definition will be recalled in this section) and show relations of this analogue to previous results.

\section{Proof of The Main Theorem}

We first fix some notation.  Let $X = \mathbb R/\mathbb Z$. Let $B(x,r)$ be the open ball in $X$ centered at $x$ with radius $r$. We denote by $\mu$ the Lebesgue measure on the unit circle $X$. Let $\psi(n)$ be a positive, non-increasing sequence and $\theta$ be an irrational with principal convergents $p_k/q_k$.

We first consider the convergence part of Theorem \ref{main-thm}.

\subsection{Proof of the Convergence Part.} The result follows from the following lemma.
\begin{lemma}\label{main-thm-conv}
If
\[
\sum_{k=0}^\infty \left(\sum_{n=q_k}^{q_{k+1}-1}\min(\psi(n),\| q_k\theta\|)\right)<\infty,
\]
then, for almost all $s\in{\mathbb R}$,
\[
\|n\theta-s\|<\psi(n)\quad\text{for finitely many}\ n \in{\mathbb N}.
\]
\end{lemma}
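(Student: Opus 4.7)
Set $A_n := B(n\theta,\psi(n)) \subset X$ and $L_k := \|q_k\theta\|$; the goal is to show $\mu(\limsup_n A_n) = 0$. I would apply the first Borel--Cantelli lemma after grouping the events $A_n$ by convergent blocks. Define
$$
E_k \;:=\; \bigcup_{n=q_k}^{q_{k+1}-1} A_n.
$$
Since $\{[q_k,q_{k+1})\}_{k\geq 0}$ partitions $\mathbb{N}$, we have $\limsup_n A_n = \limsup_k E_k$, so it suffices to prove $\sum_k \mu(E_k) < \infty$. The heart of the argument is then a geometric estimate of the form
$$
\mu(E_k) \;\leq\; C \sum_{n=q_k}^{q_{k+1}-1} \min\!\bigl(\psi(n),\,L_k\bigr)
$$
with an absolute constant $C$; summing over $k$ and invoking the hypothesis yields the desired summability.

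The decisive tool is the three-distance theorem applied to the finite orbit $\{m\theta \bmod 1 : 0 \leq m < q_{k+1}\}$: these $q_{k+1}$ points divide the circle into arcs whose \emph{minimum} length is exactly $L_k$. In particular, the centres $n\theta$ for $n \in [q_k,q_{k+1})$ are $L_k$-separated. I would split the block by comparing $\psi(n)$ to $L_k$: by monotonicity of $\psi$, the set $S_k^{\mathrm{lg}} := \{n \in [q_k,q_{k+1}) : \psi(n) > L_k\}$ is an initial segment $[q_k,m_k]$, while $S_k^{\mathrm{sm}} := \{n : \psi(n) \leq L_k\} = (m_k,q_{k+1})$. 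For $S_k^{\mathrm{sm}}$, crude subadditivity of $\mu$ suffices: $\mu\bigl(\bigcup_{n\in S_k^{\mathrm{sm}}} A_n\bigr) \leq 2 \sum_{n\in S_k^{\mathrm{sm}}} \psi(n)$, precisely the small contribution to the right-hand side. For $S_k^{\mathrm{lg}}$, where the balls overlap substantially, I would use a Voronoi-type decomposition of the circle induced by the full orbit $\{m\theta : 0 \leq m < q_{k+1}\}$ and charge each point of $\bigcup_{n\in S_k^{\mathrm{lg}}} A_n$ to a unique Voronoi cell; combined with the $L_k$-separation of the centres and with the fact that monotonicity forces the large-radius balls to live at the left end of the block (and hence near a bounded number of orbit gaps), this produces a bound of order $|S_k^{\mathrm{lg}}|\, L_k$, matching the large contribution to the right-hand side.

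Given the geometric estimate, $\sum_k \mu(E_k)$ is bounded by a constant multiple of the hypothesis sum and therefore finite, and Borel--Cantelli closes the argument. The principal obstacle I anticipate is the large-ball regime: when $\psi(n)\gg L_k$, the subadditivity bound $\sum 2\psi(n)$ overcounts badly, so one must use both the rigidity of three-distance theorem and the monotonicity of $\psi$ to replace naive subadditivity with a disjoint Voronoi charge. This conversion of heavy overlap into a disjoint cell-by-cell accounting is the combinatorial crux of the proof.
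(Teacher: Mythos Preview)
Your block-wise estimate $\mu(E_k)\le C\sum_{n=q_k}^{q_{k+1}-1}\min(\psi(n),L_k)$ is false, and the breakdown is exactly in the large-ball regime you flag as the crux. Take any $k$ with $a_{k+1}$ large, so that $L_k=\|q_k\theta\|\asymp 1/q_{k+1}$ is tiny, and suppose $\psi$ is constant equal to $1/(2q_k)$ on $[1,q_k]$ and then drops below $L_k$ immediately after. Then $S_k^{\mathrm{lg}}=\{q_k\}$, so $|S_k^{\mathrm{lg}}|\,L_k=L_k\le 1/q_{k+1}$, whereas the single ball $A_{q_k}$ already has measure $2\psi(q_k)=1/q_k$. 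No absolute constant $C$ survives this, since $q_{k+1}/q_k$ is unbounded. More generally, if $\psi$ stays near $1/(2q_k)$ on $[q_k,2q_k)$, the balls $A_n$ there are centred at a translate of $\{m\theta:0\le m<q_k\}$, whose gaps are at most $\|q_{k-1}\theta\|+L_k\le 2/q_k$; with radius $1/(2q_k)$ these balls can cover a fixed fraction of the circle while $\sum_{n=q_k}^{2q_k-1}\min(\psi(n),L_k)=q_kL_k\le q_k/q_{k+1}$ is arbitrarily small. Your Voronoi charge cannot help here: a ball of radius $\psi(n)\gg L_k$ spills over many Voronoi cells of the $q_{k+1}$-orbit, so charging points to cells recovers nothing beyond crude subadditivity.

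The paper's proof confronts precisely this obstruction and resolves it by \emph{not} bounding $\mu(E_k)$ by the $k$-th block alone. First it shows that the convergence hypothesis forces $\psi(q_k-1)<\|q_{k-1}\theta\|$ for all large $k$ (otherwise two consecutive blocks contribute at least $1/4$ infinitely often). Under this standing assumption it uses the telescoping identity $\|n\theta-(n-q_k)\theta\|=L_k$ together with monotonicity of $\psi$ to get, for $n\ge 2q_k$,
\[
\mu\bigl(B(n\theta,\psi(n))\setminus B((n-q_k)\theta,\psi(n-q_k))\bigr)\le \min(2\psi(n),L_k),
\]
so that $\mu(E_{k+1})\le 2q_k\psi(q_k)+\sum_{n=2q_k}^{q_{k+1}-1}\min(2\psi(n),L_k)$. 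The dangerous initial term $q_k\psi(q_k)$ is then absorbed by the previous two blocks via $\psi(q_k)\le\psi(q_k-1)<\|q_{k-1}\theta\|$, giving $q_k\psi(q_k)\le 2\sum_{n=q_{k-2}}^{q_k-1}\min(\psi(n),\|q_{\lfloor\cdot\rfloor}\theta\|)$. In short, the missing idea in your plan is that the large-ball contribution at level $k$ must be \emph{borrowed from earlier levels}; it cannot be bounded within level $k$ alone.
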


\begin{proof} In the proof (and also below), we will use the following well-known facts about the sequence $q_k$:
\[
\frac{1}{2}\leq q_{k+1}\|q_k\theta\|\leq 1\qquad\text{and}\qquad q_{k+1}\geq 2q_{k-1}.
\]
We will consider two cases.

In the first case, we assume that $\psi(q_{k+1}-1)\geq\|q_k\theta\|$ for infinitely many $k$. Then, for such $k$, we have
\[
\psi(n)\geq\psi(q_{k+1}-1)\geq\|q_k\theta\|
\]
for all $q_{k-1}\leq n<q_{k+1}$. Hence,
\[
\sum_{n=q_{k-1}}^{q_k-1}\min(\psi(n),\|q_{k-1}\theta\|)+\sum_{n=q_k}^{q_{k+1}-1}\min(\psi(n),\|q_k\theta\|)
\geq(q_{k+1}-q_{k-1})\|q_k\theta\|\geq\frac{q_{k+1}}{2}\|q_k\theta\|\geq\frac{1}{4}.
\]
Since this happens infinitely often, the convergence assumption is violated and thus this case will not happen.

Therefore, we may assume that $\psi(q_k-1)<\|q_{k-1}\theta\|$ for all $k$ large enough. In order to prove our claim in this case, set
$$ E_{k+1} = \bigcup_{q_k \le n < q_{k+1}} B( n\theta, \psi(n) ).$$
Then, we have
$$\bigcap_{N \ge 1} \bigcup_{n \ge N} B( n\theta, \psi(n) )
=\bigcap_{K \ge 1} \bigcup_{k \ge K} E_k.$$
Since $\| n \theta - (n-q_k) \theta\| = \|q_k \theta \|$ and $\psi(n)$ is non-increasing, we have for each $q_{k} \le n < q_{k+1}$
$$
\mu \Big( B( n \theta, \psi(n) ) \setminus B( (n-q_k)\theta, \psi(n-q_k) ) \Big) \le \| q_k \theta \|.
$$
We also have that for each $q_k \le n < q_{k+1}$
$$
\mu \Big( B( n \theta, \psi(n) ) \setminus B( (n-q_k)\theta, \psi(n-q_k) ) \Big) \le \mu \left( B( n \theta, \psi(n) ) \right) = 2 \psi (n).
$$
Thus,
\begin{equation*}
\begin{split}
\mu(E_{k+1}) &\le \sum_{n= q_k}^{2q_{k}-1} \mu \left( B(n\theta, \psi(n)) \right)
 + \sum_{n= 2q_k}^{q_{k+1}-1} \mu \Big( B(n\theta, \psi(n)) \setminus B((n-q_k)\theta, \psi(n-q_k)) \Big) \\
%&\le \sum_{n= q_k}^{2q_{k}-1} 2\psi(n) + \sum_{n= 2q_k}^{q_{k+1}-1} \min ( 2\psi(n), \| q_k \theta \|) \\
&\le 2 q_k \psi(q_k) + \sum_{n= 2q_k}^{q_{k+1}-1} \min (2\psi(n), \| q_k \theta \|).
\end{split}
\end{equation*}
Now, from $\psi( q_k -1)<\| q_{k-1}\theta\|$,
\begin{equation*}
\begin{split}
q_k \psi(q_k) &\leq 2(q_k - q_{k-2}) \psi(q_{k}-1) = 2(q_k - q_{k-2}) \min ( \psi(q_{k}-1), \| q_{k-1} \theta\|) \\
&\le 2 \left( \sum_{n=q_{k-2}}^{q_{k-1}-1} \min ( \psi(n), \| q_{k-2} \theta\| ) + \sum_{n=q_{k-1}}^{q_{k}-1} \min (\psi(n), \|q_{k-1} \theta\|)  \right).
\end{split}
\end{equation*}
Since this holds for all large $k$, we have
\[
\sum_{k}\mu(E_{k+1})<\infty.
\]
Hence, by the first Borel-Cantelli lemma, we complete the proof.
\end{proof}

\subsection{Proof of the Divergence Part.} Now, we prove the second half of Theorem \ref{main-thm}.
First, for each $n \in \mathbb N$ denote by $h(n)$  the non-increasing sequence
$$ h(n) := \min( \psi(n), \| q_k \theta \| ), \quad q_k \le n < q_{k+1}.$$
Let, for $0 \le i < a_{k+1}$,
$$
G_{k,i} := \bigcup_{q_{k+1} - (i+1)q_{k} < n \le q_{k+1} - iq_{k}} B\left( n\theta, \frac{h(q_{k+1} -iq_k)}{2} \right)
$$
and
$$ G_k := \bigcup_{i = 0}^{a_{k+1}-1} G_{k,i}.$$

Then, balls in $G_{k}$ are disjoint since
%$$  \psi(q_{k+1} - i q_k) \le  \psi(q_{k+1} - b_{k+1} q_k) \le \psi(q^*_k) \le \| q_k \theta \|$$
any two points in $ \{ n\theta : 1 \le n \le q_{k+1} \} $ are separated by at least $\| q_k \theta \|$.

%\[ \frac{q_{k+1} - q^*_k}{q_{k}}  < b_{k+1} \le \frac{q_{k+1} - q^*_k}{q_{k}}  \]

\begin{lemma}\label{thm3}
If $$ \sum_{k=0}^\infty \mu (G_k) = \infty,$$
then
$$ \mu \left(  \bigcap_{K \ge 1} \bigcup_{k \ge K} G_k  \right) = 1. $$
\end{lemma}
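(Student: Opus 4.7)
The plan is to deduce $\mu(G^*)=1$, where $G^*:=\bigcap_{K\ge 1}\bigcup_{k\ge K}G_k$, by combining a $0$--$1$ law (showing $\mu(G^*)\in\{0,1\}$) with a divergent Borel--Cantelli lemma (ruling out $\mu(G^*)=0$). I would first prove the dichotomy using ergodicity of the circle rotation $R:s\mapsto s+\theta$, and then apply the Kochen--Stone / Chung--Erd\"os inequality together with a quasi-independence estimate on the $G_k$'s.

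For the $0$--$1$ law, I would show $R^{-1}G^*$ and $G^*$ differ only by a null set. Writing $G_k=\bigcup_n B(n\theta,r_n)$ with $r_n=h(q_{k+1}-iq_k)/2$ the piecewise-constant radius attached to each $n\in(q_{k-1},q_{k+1}]$, the preimage $R^{-1}G_k$ is $\bigcup_n B((n-1)\theta,r_n)$, which differs from $G_k$ only on the balls corresponding to the $\le a_{k+1}+1$ indices $n$ where either $n$ leaves the admissible range or the radius changes across a sub-range boundary. Each such ball has measure $\le \|q_k\theta\|$, so
\[
\mu(R^{-1}G_k\,\triangle\, G_k)\le C(a_{k+1}+1)\|q_k\theta\|\le C'/q_k,
\]
using $a_{k+1}\|q_k\theta\|\le q_{k+1}\|q_k\theta\|/q_k\le 1/q_k$. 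Since $q_k$ grows at least geometrically, $\sum_k 1/q_k<\infty$, and the first Borel--Cantelli lemma yields $\mu(\limsup_k(R^{-1}G_k\,\triangle\, G_k))=0$. The inclusion $R^{-1}G^*\,\triangle\, G^*\subseteq\limsup_k(R^{-1}G_k\,\triangle\, G_k)$ then gives invariance, and ergodicity of $R$ forces $\mu(G^*)\in\{0,1\}$.

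To exclude $\mu(G^*)=0$, I would combine the divergent Borel--Cantelli lemma
\[
\mu(G^*)\ge \limsup_{N\to\infty}\frac{(\sum_{k=1}^N\mu(G_k))^2}{\sum_{j,k=1}^N\mu(G_j\cap G_k)}
\]
with a quasi-independence estimate $\mu(G_j\cap G_k)\le C\mu(G_j)\mu(G_k)$, valid for $j<k$ with $k-j$ large. The balls making up $G_k$ have radius at most $\|q_k\theta\|/2\le 1/(2q_{k+1})$, far smaller than the characteristic scale $\|q_j\theta\|$ of $G_j$; moreover, by the three-distance theorem their centers $\{n\theta:q_{k-1}<n\le q_{k+1}\}$ are distributed on $X$ with gaps of size $\|q_k\theta\|$ and $\|q_{k-1}\theta\|$, hence essentially uniformly at scale $\|q_j\theta\|$. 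Thus the fraction of these small balls falling inside $G_j$ is comparable to $\mu(G_j)$, which gives the required bound.

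The main obstacle is the quasi-independence estimate: one must quantitatively convert the equidistribution of $\{n\theta\}$ into a comparison between $\mu(G_j\cap G_k)$ and $\mu(G_j)\mu(G_k)$ while absorbing boundary errors coming from the fundamental intervals at level $q_k$. Once this is established, the Kochen--Stone inequality yields $\mu(G^*)\ge (C+1)^{-1}>0$, which combined with the $0$--$1$ law of the second paragraph forces $\mu(G^*)=1$, completing the proof of the lemma.
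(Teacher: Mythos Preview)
Your two-step plan (ergodic $0$--$1$ law, then Kochen--Stone) is sound in spirit, and the invariance argument for $G^*$ under the rotation is correct. The gap is in the quasi-independence step: the multiplicative bound $\mu(G_j\cap G_k)\le C\,\mu(G_j)\mu(G_k)$ you claim is \emph{false} in general. The set $G_j$ always consists of $q_{j+1}-q_{j-1}\ge q_{j+1}/2$ disjoint intervals regardless of how small $\mu(G_j)$ is (just take $\psi$ tiny on the relevant range). The equidistribution argument you sketch (three-distance or Denjoy--Koksma) only controls the number of $G_{k,i}$-centres in each interval $I$ up to an additive $O(1)$ error; summing over the $\sim q_{j+1}$ intervals of $G_j$ yields
\[
\mu(G_k\cap G_j)\ \le\ \mu(G_k)\mu(G_j)\;+\;O\!\left(\frac{q_{j+1}}{q_k}\right)\mu(G_k),
\]
and the error term is \emph{not} dominated by $C\mu(G_j)\mu(G_k)$ when $\mu(G_j)\ll q_{j+1}/q_k$.

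The fix is simply to keep the additive error: since $q_{j+1}/q_k\le C\,2^{-(k-j)/2}$, summing the error over $j<k$ gives $O\bigl(\sum_k\mu(G_k)\bigr)$, which is lower order than $\bigl(\sum_k\mu(G_k)\bigr)^2$. Plugging this into Kochen--Stone already yields $\mu(G^*)\ge 1$ directly, so your $0$--$1$ law becomes unnecessary. This is exactly the paper's route: it proves the additive quasi-independence bound via Denjoy--Koksma and then applies a quantitative Borel--Cantelli lemma (Sprind\v{z}uk's version) to obtain full measure in one stroke. Your ergodicity detour is a nice alternative device for upgrading positive measure to full measure, but it only becomes relevant if one has merely $\mu(G^*)>0$, which is precisely the situation your (incorrect) multiplicative bound would have produced.
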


\begin{proof}
We estimate $\mu(G_\ell \cap G_k)$, $\ell< k$ by the Denjoy-Koksma inequality (see, e.g., \cite{Herman}): let $T$ be an irrational rotation by $\theta$ and $f$ be a real valued function of bounded variation on the unit interval. Then, for any $x$, we have
\begin{equation}\label{Koksma}
\left| \sum_{n=0}^{q_k -1} f(T^n x) - q_k \int f \, d\mu \right | \leq \text{\rm var} (f). \end{equation}

For a given interval $I$, by the Denjoy-Koksma inequality (\ref{Koksma}), we have
\begin{equation*}
\# \left\{ 0 \le n < q_{k} : n\theta \in I \right\} = \sum_{n=0}^{q_{k}-1} 1_{I} (T^n x) \leq q_{k} \mu (I) + 2.
\end{equation*}
Since $G_{k,i}$ consists of the disjoint balls centered at $q_k$ orbital points %of $(q_{k+1} - (i+1) q_k)\theta$
 with radius $r:= h (q_{k+1} - iq_{k}) /2$, we have for each $i$
$$ \mu \left( G_{k,i} \cap I \right)
\leq  \# \left\{ 0 \le n < q_{k} : n\theta \in I \right\} \cdot 2r + 2r
\leq \left( q_{k} \mu(I) + 3 \right) \cdot 2r = \mu(G_{k,i}) \mu(I) + \frac{3}{q_k} \mu (G_{k,i}).$$

Note that $G_\ell$ consists of at most $q_{\ell+1}$ intervals.

Therefore, we have for $k > \ell$
\begin{equation*}
\mu( G_{k,i} \cap G_\ell )
\leq \mu( G_{k,i} ) \mu( G_\ell ) + \frac{3 q_{\ell+1}}{q_k} \mu(G_{k,i}).
\end{equation*}

Since $G_k = \cup G_{k,i}$ is a disjoint union, we have
\begin{equation*}\begin{split}
\mu( G_{k} \cap G_\ell ) &\leq \mu( G_{k} ) \mu( G_\ell ) + \frac{3 q_{\ell+1}}{q_k} \mu(G_{k}) \\
&\leq \mu( G_{k} ) \mu( G_\ell ) + 3 \left(\frac{1}{2}\right)^{\lfloor (k-\ell-1)/2 \rfloor} \mu(G_{k}) \\
&\le \mu( G_{k} ) \mu( G_\ell ) + \frac{6}{2^{(k-\ell)/2}} \mu(G_{k}).
\end{split}
\end{equation*}

We need a version of Borel-Cantelli lemma (e.g. \cite {Harman, Sp}) to go further:
\begin{lemma}\label{Sp}
Let $(\Omega, \mu)$ be a measure space,
let $f_k(\omega)$ $(k=1,2,\dots)$ be a sequence of nonnegative $\mu$-measurable functions, and let $\varphi_k$ be a sequence of real numbers such that
$ 0 \le \varphi_k \le1$ $(k = 1, 2, \dots).$
Suppose that
$$ \int_\Omega \left( \sum_{m < k \le n} f_k (\omega) - \sum_{m < k \le n} \varphi_k \right)^2 d\mu \le C \sum_{m < k \le n} \varphi_k$$
for arbitrary integers $m<n$. Then,
$$ \sum_{1 \le k \le n } f_k (\omega) = \Phi(n)  + {\mathcal O}( \Phi^{1/2}(n) \ln^{3/2+\varepsilon} \Phi (n))$$
for almost all $\omega \in \Omega$, where $\varepsilon >0$ is arbitrary and $\Phi(n) = \sum_{1 \le k \le n} \varphi_k$.
\end{lemma}

Put $\varphi_k = \mu(G_k)$ and $f_k (\omega) = 1_{G_k} (\omega)$ in Lemma~\ref{Sp}.
Then, we have, for any $m<n$,
\begin{equation*}
\begin{split}
&\int \left( \sum_{m < k \le n} f_k (\omega) - \sum_{m < k \le n} \varphi_k \right)^2 d\mu \\
%&= \sum_{m < k \le n}  \sum_{m < \ell \le n} \left ( \int 1_{G_k} 1_{G_\ell} d\mu - \mu (G_k) \mu (G_\ell) \right ) \\
&\le 2 \sum_{m < \ell < k \le n} \left( \mu ( G_k \cap G_{\ell} ) - \mu (G_k) \mu (G_\ell) \right)  +  \sum_{m < k \le n} \mu(G_k) \\
&\leq 2 \sum_{m < k \le n}  \sum_{m < \ell < k} \frac{6}{2^{(k-\ell)/2}} \mu (G_k) + \sum_{m < k \le n} \mu(G_k)
\leq \left( \frac{12}{\sqrt{2} -1} +1 \right) \sum_{m < k \le n} \mu (G_k).
\end{split}
\end{equation*}
Therefore, by Lemma~\ref{Sp},
if
$$ \sum_k \mu (G_k) = \infty,$$
then, we have for almost every $\omega$,
$$\sum_{k=1}^\infty 1_{G_k} (\omega) = \infty,$$
i.e.,
\begin{equation*} \omega \in G_k \quad \text{ for  infinitely many $k$'s}. \qedhere\end{equation*}
\end{proof}

\begin{lemma}
If $\sum_{n = 1}^{\infty} h (n) = \infty$, then we have
$$\sum_{k=0}^\infty \mu (G_k) = \infty. $$
\end{lemma}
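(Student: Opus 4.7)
The plan is to establish, for every $k\ge 1$, the block-wise bound
\[
\sum_{n=q_k}^{q_{k+1}-1} h(n) \le \mu(G_{k-1}) + \mu(G_k).
\]
Summing in $k$ and noting that $\sum_{n < q_1} h(n)$ is a finite initial chunk, this yields $\sum_n h(n) \le C + 2 \sum_k \mu(G_k)$ for some constant $C$, so divergence of $\sum_n h(n)$ forces divergence of $\sum_k \mu(G_k)$.

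To obtain the block-wise bound I would split $[q_k, q_{k+1}-1]$ at the point $q_k+q_{k-1}$ into a short piece $[q_k, q_k+q_{k-1}-1]$ of length $q_{k-1}$ and a long piece $[q_k+q_{k-1}, q_{k+1}-1]$ of length $(a_{k+1}-1) q_k$. On the short piece, since $\psi$ is non-increasing we have $h(n) = \min(\psi(n), \|q_k\theta\|) \le \min(\psi(q_k), \|q_k\theta\|) = h(q_k)$ for every $n\ge q_k$, so its contribution is at most $q_{k-1} h(q_k)$. But $q_{k-1} h(q_k)$ is precisely the $i=0$ summand of $\mu(G_{k-1}) = q_{k-1}\sum_{i=0}^{a_k-1} h(q_k - iq_{k-1})$, and therefore is dominated by $\mu(G_{k-1})$.

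On the long piece, the key observation is that it is the disjoint union $\bigcup_{i=1}^{a_{k+1}-1}[m_i, m_{i-1}-1]$, where $m_i := q_{k+1} - i q_k$: each of the $a_{k+1}-1$ intervals has length exactly $q_k$, and their union is indeed $[q_k + q_{k-1}, q_{k+1}-1]$. Since $h$ is non-increasing and $n \ge m_i$ throughout the $i$-th interval, $h(n) \le h(m_i)$ there, and summing produces the bound $q_k \sum_{i=1}^{a_{k+1}-1} h(m_i) \le \mu(G_k)$.

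The only real content is the combinatorial alignment in the third paragraph: the split point $q_k+q_{k-1}$ has to be chosen so that the long piece decomposes into $a_{k+1}-1$ complete ``$q_k$-cells'' ending at the same centers $m_i$ used in the definition of $G_k$, because it is on these cells that monotonicity delivers the uniform upper bound $h(m_i)$. Once this alignment is arranged, both the short-piece and long-piece bounds are one-liners, and no further analytic input (Borel--Cantelli, Denjoy--Koksma, etc.) is required.
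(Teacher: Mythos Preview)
Your argument is correct and follows essentially the same decomposition as the paper: split $[q_k,q_{k+1}-1]$ at $q_k+q_{k-1}$, bound the short piece by $q_{k-1}h(q_k)$ and the long piece by $q_k$-blocks aligned with the centers $q_{k+1}-iq_k$ appearing in the $G_{k,i}$. The only difference is bookkeeping for the leftover term $q_{k-1}h(q_k)$: the paper telescopes it against $-q_k h(q_{k+1})$ to obtain the sharper bound $\sum_{n<q_{K+1}} h(n) + q_K h(q_{K+1}) \le \sum_{k\le K}\mu(G_k)$, whereas you absorb it into $\mu(G_{k-1})$ at the cost of a harmless factor of~$2$; both routes suffice for the divergence conclusion.
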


\begin{proof}
Since for $k \ge 0$
\begin{align*}
\sum_{n = q_k}^{q_{k+1} -1} h(n)
&= \sum_{n=q_k}^{q_k+q_{k-1}-1} h(n) + \sum_{i=1}^{a_{k+1}-1} \left( \sum_{n = iq_k + q_{k-1}}^{(i+1)q_k+q_{k-1}-1} h(n) \right) \\
&\le q_{k-1} h(q_k) + \sum_{i=1}^{a_{k+1}-1} q_k  h(iq_k + q_{k-1}) \\
&= q_{k-1} h(q_k) + \sum_{i=1}^{a_{k+1}} q_k  h(iq_k + q_{k-1}) - q_k h (q_{k+1}) \\
&= q_{k-1} h(q_k) + \mu(G_k) - q_k h(q_{k+1}),
\end{align*}
where $q_{-1} =0$, we have
$$\sum_{k=0}^K \sum_{n = q_k}^{q_{k+1} -1} h(n) + q_{K} h(q_{K+1}) \le \sum_{k=0}^K \mu(G_k).$$
From this the claim follows.
\end{proof}

Since
\begin{equation*}
G_k = \bigcup_{i = 0}^{a_{k+1} -1} \left( \bigcup_{q_{k+1} - (i+1)q_{k} < n \le q_{k+1} - iq_{k}} B\left( n\theta, \frac{f (q_{k+1} - iq_{k})}{2}  \right) \right) \subseteq \bigcup_{q_{k-1} < n \le q_{k+1}} B\left( n\theta, \psi(n) \right),
\end{equation*}
we have
\begin{equation*}
\bigcap_{K \ge 0} \bigcup_{k \ge K} G_k \subseteq \bigcap_{N \ge 1} \bigcup_{n \ge N} B \left( n\theta,\psi(n) \right).
\end{equation*}
Therefore, $\sum_{n = 1}^{\infty} f (n) = \infty$ implies that
$$ \mu \left( \bigcap_{N \ge 1} \bigcup_{n \ge N} B \left( n\theta,\psi(n) \right) \right) = 1. $$
This concludes the proof of the divergence part.

\section{The Theorems of Tseng and Kurzweil}\label{Ku-Ts}

In this section, we will give several consequences of Theorem \ref{main-thm}. More precisely, we will show that our result contains three previous theorems. One of them is Kurzweil's theorem mentioned in the introduction. The other two are generalization of Kurzweil's result: the first is due to Tseng \cite{Tseng} and the second is due to Kurzweil himself \cite{Kurzweil}. We start by introducing these two results.

First, we explain Tseng's theorem. Therefore, we need the following notation
\[
\Omega^{(\tau)}:=\{\theta\in{\mathbb R}\ :\ \text{there exists $c>0$ with $\|n\theta\|\geq c/n^{\tau}$ for all $n\geq 1$}\}.
\]
Note that this definition slightly differs from \cite{Tseng}, where $\tau$ was replaced by $\tau-1$. Also, note that $\tau=1$ is by definition the set of badly approximable numbers. Moreover, we let
\[
\Theta^{(\tau)}:=\left\{\theta\in{\mathbb R}\ :\ (\ref{inf-sol})\ \text{holds for all $\psi(n)$ with $\sum_{n\geq 1}\psi(n)^{\tau}=\infty$}\right\}.
\]
Now, we can state Tseng's theorem.
\begin{theorem}[Tseng \cite{Tseng}]
For $\tau\geq 1$, we have,
\[
\Omega^{(\tau)}=\Theta^{(\tau)}.
\]
\end{theorem}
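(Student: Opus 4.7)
Using Theorem \ref{main-thm}, the assertion $\theta\in\Theta^{(\tau)}$ translates to: for every positive non-increasing $\psi$ with $\sum_n\psi(n)^\tau=\infty$, the series (\ref{nec-suf-cond}) diverges. Moreover, since $q_{k+1}\|q_k\theta\|\in[1/2,1]$, the condition $\theta\in\Omega^{(\tau)}$ is equivalent to $q_{k+1}=O(q_k^\tau)$. I will prove the two inclusions separately through this reformulation.

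For $\Omega^{(\tau)}\subseteq\Theta^{(\tau)}$, fix $\theta$ with $\|q_k\theta\|\geq c/q_k^\tau$ and let $\psi$ satisfy $\sum\psi^\tau=\infty$. The dichotomy in Lemma \ref{main-thm-conv} lets me assume $\psi(q_{k+1}-1)<\|q_k\theta\|$ for all large $k$, since otherwise the argument of that lemma already forces (\ref{nec-suf-cond}); applying this at $k-1$ yields $\psi(q_k)<1/q_k$. I then split each $[q_k,q_{k+1})$ into $B_k^-=\{n:\psi(n)\geq\|q_k\theta\|\}$ and $B_k^+=\{n:\psi(n)<\|q_k\theta\|\}$: on $B_k^+$ one has $\psi(n)^\tau\leq\psi(n)=\min(\psi(n),\|q_k\theta\|)$ (using $\psi\leq 1$ and $\tau\geq 1$), while on $B_k^-$ one has $\psi(n)^\tau\leq\psi(q_k)^\tau<q_k^{-\tau}\leq c^{-1}\|q_k\theta\|=c^{-1}\min(\psi(n),\|q_k\theta\|)$. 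Summing over $n$ and $k$ yields $\sum_n\psi(n)^\tau\leq(1+c^{-1})\sum_k\sum_{n=q_k}^{q_{k+1}-1}\min(\psi(n),\|q_k\theta\|)$, transferring the divergence.

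For $\Theta^{(\tau)}\subseteq\Omega^{(\tau)}$, suppose $\theta\notin\Omega^{(\tau)}$, so $\liminf_k q_k^\tau\|q_k\theta\|=0$. I choose a sparse increasing sequence $K_1<K_2<\cdots$ with $\|q_{K_j}\theta\|q_{K_j}^\tau\leq j^{-5\tau}$ and $q_{K_{j+1}}\geq q_{K_j+1}^2$, set $N_j:=\lfloor(j^2\|q_{K_j}\theta\|)^{-1}\rfloor$, and define
\[
\psi(n):=\begin{cases} j^{2/\tau}\|q_{K_j}\theta\|^{1/\tau}, & n\in[q_{K_j},q_{K_j}+N_j),\\ (j^2q_{K_{j+1}})^{-1}, & n\in[q_{K_j}+N_j,q_{K_{j+1}}),\end{cases}
\]
extended constantly on $[0,q_{K_1})$ by the $j=1$ spike value. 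The ``spike'' contributes $N_j\cdot j^2\|q_{K_j}\theta\|=1$ to $\sum\psi^\tau$ per $j$, forcing divergence; and it contributes $N_j\|q_{K_j}\theta\|\leq j^{-2}$ to (\ref{nec-suf-cond}), since $\psi>\|q_{K_j}\theta\|$ there forces $\min=\|q_{K_j}\theta\|$. The ``trough'' contributes at most $q_{K_{j+1}}\cdot(j^2q_{K_{j+1}})^{-1}=j^{-2}$, since $\psi<\|q_k\theta\|$ on every block inside it forces $\min=\psi$. Summing over $j$ shows (\ref{nec-suf-cond}) converges, so $\theta\notin\Theta^{(\tau)}$.

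The hardest part will be verifying that $\psi$ is globally non-increasing at the transitions between spikes and troughs. The calibrated sparsity conditions $\|q_{K_j}\theta\|q_{K_j}^\tau\leq j^{-5\tau}$ and $q_{K_{j+1}}\geq q_{K_j+1}^2$ are chosen precisely so that each spike value $j^{2/\tau}\|q_{K_j}\theta\|^{1/\tau}$ lies below the preceding trough value and each trough value $(j^2q_{K_{j+1}})^{-1}$ lies below the preceding spike value, making $\psi$ monotone across every jump.
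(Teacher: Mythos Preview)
Your argument is correct. The inclusion $\Omega^{(\tau)}\subseteq\Theta^{(\tau)}$ is essentially identical to the paper's: both reduce via the case analysis of Lemma~\ref{main-thm-conv} to $\psi(q_k-1)<\|q_{k-1}\theta\|$, deduce $\psi(n)<1/q_k$ on $[q_k,q_{k+1})$, and conclude $\psi(n)^\tau\lesssim\min(\psi(n),\|q_k\theta\|)$; your explicit split into $B_k^\pm$ is only a cosmetic difference from the paper's uniform bound $\min(\psi(n),\|q_k\theta\|)\geq\min(c,1)\psi(n)^\tau$.

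For the reverse inclusion the two proofs diverge in their explicit counterexample. The paper simply reuses Tseng's sequence: $\psi$ is piecewise constant with a \emph{single} value $\psi\equiv\tfrac12(\ell+1)^{-2}v_{\ell+1}^{-1}$ on each block $[u_\ell,u_{\ell+1})$, where $u_\ell=\lfloor\ell^{2\tau}v_\ell^\tau\rfloor$ and $v_\ell$ is a convergent denominator with $\|v_\ell\theta\|\leq(2\ell^{2\tau+2}v_\ell^\tau)^{-1}$; the $h$-sum on $[u_\ell,u_{\ell+1})$ is then bounded by $v_{\ell+1}\psi(u_\ell)+u_{\ell+1}\|v_{\ell+1}\theta\|\leq(\ell+1)^{-2}$. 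Your construction instead uses \emph{two} levels per block, a short spike of height $j^{2/\tau}\|q_{K_j}\theta\|^{1/\tau}$ and a long trough of height $(j^2q_{K_{j+1}})^{-1}$. Both exploit exactly the same resource, the gap $\|q_{K_j}\theta\|q_{K_j}^\tau\to 0$, but concentrate the $\sum\psi^\tau$ mass differently: Tseng spreads it over a long interval of length $\asymp\ell^{2\tau}v_\ell^\tau$, while you pack it into the spike of length $N_j$. The paper's version has the advantage that monotonicity of $\psi$ is immediate (one only compares successive block values), whereas you must carefully calibrate the sparsity conditions $\|q_{K_j}\theta\|q_{K_j}^\tau\leq j^{-5\tau}$ and $q_{K_{j+1}}\geq q_{K_j+1}^2$ to force monotonicity at both spike--trough and trough--spike transitions. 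Your checks for these transitions are correct (using $\|q_{K_j}\theta\|\geq(2q_{K_j+1})^{-1}$ for (a) and the imposed upper bound on $\|q_{K_{j+1}}\theta\|$ for (b)); the only caveat is that a few estimates (e.g.\ $q_{K_j}+N_j\leq q_{K_j+1}$) require $j\geq 2$ or $K_1$ chosen moderately large, which is harmless. Conversely, your construction makes the mechanism ``spike drives $\sum\psi^\tau$, $\min$ caps the $h$-sum'' more transparent.
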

Note that for $\tau=1$ this is Kurzweil's theorem. In his paper \cite{Kurzweil}, Kurzweil himself also gave a generalization of his theorem. We will state this generalization next. Again, we need some notation. First, consider a sequence $\varphi(n)$ with
\begin{align}
&\text{(P1)}\qquad n\varphi(n)\ \text{non-increasing};\label{prop-1}\\
&\text{(P2)}\qquad 0<n^2\varphi(n)\leq 1\ \text{for}\ n\geq 1.\label{prop-2}
\end{align}
For such a sequence, we define
\[
\Omega^{(\varphi)}:=\{\theta\in{\mathbb R}\ :\ \text{there exists $c>0$ with $\|n\theta\|\geq n\varphi(cn)$ for all $n\geq 1$}\}.
\]
Moreover, we consider positive, non-increasing sequences $\psi(n)$ such that there exists an increasing sequence $t_i$ and a non-decreasing function $\delta(n)\geq 1$ which tends to infinity as $n$ tends to infinity with
\begin{equation}\label{series-kurz-3a}
t_{i+1}\geq\frac{1}{t_i\varphi(t_i\delta(t_i))}
\end{equation}
and
\begin{equation}\label{series-kurz-3}
\sum_{i\geq 1}t_i\psi\left(\left\lfloor\frac{1}{t_i\varphi(t_i\delta(t_i))}\right\rfloor\right)=\infty.
\end{equation}
For such sequences, we define
\[
\Xi^{(\varphi)}:=\{\theta\in{\mathbb R}\ :\ (\ref{inf-sol})\ \text{holds for all $\psi(n)$ with the above properties}\}.
\]
Then, Kurzweil's result in \cite{Kurzweil} reads as follows.
\begin{theorem}[Kurzweil \cite{Kurzweil}]\label{Kurz2}
We have,
\[
\Omega^{(\varphi)}=\Xi^{(\varphi)}.
\]
\end{theorem}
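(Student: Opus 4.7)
The plan is to apply Theorem~\ref{main-thm}, which turns membership in $\Xi^{(\varphi)}$ into the divergence of $\sum_k S_k$, where $S_k := \sum_{n=q_k}^{q_{k+1}-1}\min(\psi(n),\|q_k\theta\|)$. Both inclusions reduce to understanding when this series diverges.

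For the inclusion $\Omega^{(\varphi)}\subseteq\Xi^{(\varphi)}$, let $\theta$ satisfy $\|q_k\theta\|\geq q_k\varphi(cq_k)$ and fix $\psi$ with witness data $t_i,\delta$. The key first step is a monotonicity observation: by (P1) the map $m\mapsto m\varphi(cm)$ is non-increasing, so $\|q_k\theta\|\geq q_k\varphi(cq_k)\geq n\varphi(cn)$ throughout each block $[q_k,q_{k+1})$, whence $\sum_k S_k\geq\sum_n\min(\psi(n),n\varphi(cn))$. Thus it suffices to show the right-hand side diverges whenever (\ref{series-kurz-3a})--(\ref{series-kurz-3}) hold. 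Writing $n_i=\lfloor 1/(t_i\varphi(t_i\delta(t_i)))\rfloor$, properties (P1)--(P2) together with $\delta(t_i)\to\infty$ give $n_i\geq t_i\delta(t_i)\geq 2t_i$ eventually; (\ref{series-kurz-3a}) places $n_i\leq t_{i+1}$, so blocks around the $t_i$'s are disjoint. For each large $i$ I would exhibit a sub-interval $I_i\subseteq[t_i,n_i]$ of length $\gtrsim t_i$ on which both $\psi$ and $n\varphi(cn)$ dominate $\psi(n_i)$ up to a constant, by truncating at the smallest $n$ where either quantity drops below that threshold; summing $\min(\psi(n),n\varphi(cn))\gtrsim \psi(n_i)$ over $I_i$ yields a contribution $\gtrsim t_i\psi(n_i)$, and (\ref{series-kurz-3}) closes the argument.

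For the converse $\Xi^{(\varphi)}\subseteq\Omega^{(\varphi)}$, assume $\theta\notin\Omega^{(\varphi)}$. Then for every $c>0$ some $n$ violates $\|n\theta\|\geq n\varphi(cn)$; the best-approximation property of the convergents forces the witnesses to be essentially convergent denominators, yielding a subsequence of indices $k_i$ together with $c_i\to\infty$ such that $\|q_{k_i}\theta\|<q_{k_i}\varphi(c_iq_{k_i})$. The strategy is to set $t_i=q_{k_i}$ (after sparsification), to pick $\delta$ non-decreasing with $\delta(t_i)$ tied to $c_i$, and to build a non-increasing step-function $\psi$ satisfying (\ref{series-kurz-3a}) and making $\sum_i t_i\psi(n_i)$ diverge, while simultaneously keeping $\sum_k S_k$ finite. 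The contribution of each bad block is controlled by $S_{k_i}\leq(q_{k_i+1}-q_{k_i})\|q_{k_i}\theta\|\leq 1$, so the subsequence can be sparsified to make $\sum_i S_{k_i}<\infty$, and on the remaining blocks $\psi$ is chosen small enough for summability. Lemma~\ref{main-thm-conv} then gives $\theta\notin\Xi^{(\varphi)}$.

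The main obstacle is twofold. In the forward direction, ensuring that the interval $I_i$ has length $\gtrsim t_i$ requires a case split on the sign of $\psi(n_i)-n_i\varphi(cn_i)$: when $\psi(n_i)$ dominates, the naive choice $I_i=[t_i,n_i]$ yields only $t_i\cdot n_i\varphi(cn_i)$, which is too small, and one must instead truncate $I_i$ at the last $n$ where $n\varphi(cn)\geq\psi(n_i)$ and check this truncation still has length $\gtrsim t_i$ using the precise relationship between $n_i,\delta(t_i)$, and the monotonicity of $m\varphi(cm)$. In the reverse direction, the non-increasing constraint on $\psi$ couples its values across all blocks, so balancing Kurzweil's divergence condition against the convergence of $\sum_k S_k$ demands a carefully staged construction of $\psi$ on both the bad blocks (where $\|q_k\theta\|$ is small and $\psi$ can be large) and on the intervening good blocks (where $\psi$ must already be small enough for Borel--Cantelli summability).
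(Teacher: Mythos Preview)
Your forward-direction argument contains a genuine gap. The reduction
\[
\sum_k S_k \;\geq\; \sum_{n\geq 1}\min\bigl(\psi(n),\,n\varphi(cn)\bigr)
\]
is correct, but the claim that the right-hand side diverges whenever (\ref{series-kurz-3a})--(\ref{series-kurz-3}) hold is false. Take $\varphi(n)=1/(n^2\log^2 n)$ (satisfying (P1)--(P2)) and $\psi\equiv 1$. Then $\psi$ is positive non-increasing, and with any sufficiently fast-growing $t_i$ and any $\delta(n)\to\infty$ the Kurzweil conditions are met (the series in (\ref{series-kurz-3}) is just $\sum_i t_i=\infty$). Yet $n\varphi(cn)\asymp 1/(n\log^2 n)$, so $\min(\psi(n),n\varphi(cn))=n\varphi(cn)$ and $\sum_n n\varphi(cn)<\infty$. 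Your proposed truncation at the last $n$ with $n\varphi(cn)\geq\psi(n_i)$ cannot rescue this, since here $\psi(n_i)=1$ and no such $n$ exists. The bound $\|q_k\theta\|\geq n\varphi(cn)$, while valid, discards the fact that $\|q_k\theta\|$ may be much larger than the Diophantine lower bound.

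The paper avoids this by \emph{not} passing through $n\varphi(cn)$ as a lower bound for $\|q_k\theta\|$. Instead it first invokes the dichotomy from the proof of Lemma~\ref{main-thm-conv}: either $\psi(q_{k+1}-1)\geq\|q_k\theta\|$ infinitely often (in which case (\ref{nec-suf-cond}) diverges outright), or eventually $\psi(q_k-1)<\|q_{k-1}\theta\|$. In the second case the lower bound on $\|q_k\theta\|$ comes from $\psi$ itself, and the Diophantine hypothesis on $\theta$ is used only to control the \emph{size of $q_{i_s}$} relative to $t_i$ and $\delta(t_i)$ (so as to place $q_{i_s}\leq 1/(t_i\varphi(t_i\delta(t_i)))$), not to bound $\|q_k\theta\|$ directly. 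In the counterexample above the first branch of the dichotomy applies, which is exactly what your argument misses.

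For the reverse inclusion the paper does not build $\psi$ from scratch: it simply takes Kurzweil's original counterexample sequence and verifies that it fails (\ref{nec-suf-cond}), a check analogous to the one carried out for Tseng's construction. Your outline of a bespoke construction is plausible but more work than necessary; the delicate balancing you describe is already handled in Kurzweil's paper.
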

Note that $\Omega^{(1/n^{\tau+1})}=\Omega^{(\tau)}$. However, it was shown in \cite{Tseng} that $\Theta^{(\tau)}\ne\Xi^{(1/n^{\tau+1})}$ unless $\tau=1$. More precisely, Tseng showed that for $\tau>1$ neither
\[
\Theta^{(\tau)}\subseteq\Xi^{(1/n^{\tau+1})}\qquad\text{nor}\qquad\Theta^{(\tau)}\supseteq\Xi^{(1/n^{\tau+1})}.
\]

Both of the above result are consequences of Theorem \ref{main-thm} as will be shown next.

\subsection{$\theta\in\Omega^{(\star)}$.} Here, we show that if $\theta\in\Omega^{(\tau)}$ or $\Omega^{(\varphi)}$, then $\theta\in\Theta^{(\tau)}$ or $\Xi^{(\varphi)}$, respectively.

We first consider Tseng's theorem and as a warm-up we begin with the case $\tau=1$ (Kurzweil's theorem).
\begin{lemma}
We have,
\[
\Omega^{(1)}\subseteq\Theta^{(1)}.
\]
\end{lemma}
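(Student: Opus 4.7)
The plan is to verify the divergence condition (\ref{nec-suf-cond}) of Theorem \ref{main-thm}, from which the conclusion $\Omega^{(1)} \subseteq \Theta^{(1)}$ follows immediately. So I would fix $\theta \in \Omega^{(1)}$ and a positive non-increasing $\psi$ with $\sum_n \psi(n) = \infty$, and aim to show that the double sum in (\ref{nec-suf-cond}) diverges.

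The first step uses bad approximability to replace the somewhat opaque quantity $\|q_k\theta\|$ by something explicit. By assumption there is $c > 0$ with $\|n\theta\| \geq c/n$ for all $n \geq 1$; specializing to $n = q_k$ gives $\|q_k\theta\| \geq c/q_k \geq c/n$ for every $n$ in the range $q_k \leq n < q_{k+1}$. Substituting into (\ref{nec-suf-cond}) yields
\[
\sum_{k=0}^{\infty} \sum_{n=q_k}^{q_{k+1}-1} \min(\psi(n), \|q_k\theta\|) \;\geq\; \sum_{n \geq q_0} \min\!\left(\psi(n), \frac{c}{n}\right),
\]
so it suffices to show that the right-hand sum is infinite.

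The second step is a dyadic (Cauchy condensation) argument. Since $\psi$ is non-increasing, $\sum_n \psi(n) = \infty$ is equivalent to $\sum_k 2^k \psi(2^k) = \infty$. On each dyadic block $[2^k, 2^{k+1})$ one has $\min(\psi(n), c/n) \geq \min(\psi(2^{k+1}), c/2^{k+1})$, so the block contributes at least $\min(2^k \psi(2^{k+1}), c/2)$. I would then split into cases: if $2^k\psi(2^{k+1}) \geq c/2$ holds for infinitely many $k$, the series trivially diverges; otherwise the minimum equals $2^k \psi(2^{k+1})$ for all large $k$, and summing over $k$ gives, up to a factor of $2$, $\sum_k 2^{k+1}\psi(2^{k+1})$, which is comparable via condensation to $\sum_n \psi(n) = \infty$.

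I do not foresee any substantive obstacle here. The bad approximability hypothesis is strong enough to absorb $\|q_k\theta\|$ into a tractable $1/n$ decay, after which the remaining comparison of $\sum \min(\psi(n), c/n)$ with $\sum \psi(n)$ is a routine piece of dyadic analysis. An invocation of Theorem \ref{main-thm} then closes the argument.
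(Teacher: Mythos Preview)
Your proposal is correct and follows essentially the same route as the paper: both reduce to Theorem~\ref{main-thm} via the bound $\|q_k\theta\|\ge c/q_k\ge c/n$ on each block $q_k\le n<q_{k+1}$, and then handle $\sum_n\min(\psi(n),c/n)$ by Cauchy condensation. The only cosmetic difference is that the paper applies condensation directly to the non-increasing sequence $\min(\psi(n),c/n)$ to obtain $\sum_k\min(2^k\psi(2^k),c)$, whereas you perform the dyadic block estimate by hand and then split into the same two cases.
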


\begin{proof}
First recall that $\theta\in\Omega^{(1)}$ means that there exists a $c>0$ such that $\|n\theta\|\geq c/n$ for all $n\geq 1$. Thus, for $q_k\leq n<q_{k+1}$
\[
\|q_k\theta\|\geq\frac{c}{q_k}\geq\frac{c}{n}.
\]
This implies that (\ref{nec-suf-cond}) holds provided that
\[
\sum_{n\geq 1}\min\left(\psi(n),\frac{c}{n}\right)=\infty.
\]
By Cauchy's condensation principle, the latter is equivalent to showing that
\[
\sum_{n\geq 0}\min\left(2^n\psi(2^n),c\right)=\infty.
\]
This, in turn follows from $\sum_{n\geq 0}2^n\psi(2^n)=\infty$ which again by Cauchy's condensation principle is equivalent to the assumption.
\end{proof}

We now generalize this to general $\tau$.

\begin{lemma}
For $\tau\geq 1$, we have
\[
\Omega^{(\tau)}\subseteq\Theta^{(\tau)}.
\]
\end{lemma}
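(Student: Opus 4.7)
The plan is to use Theorem~\ref{main-thm} to reduce the claim to the following statement: whenever $\theta \in \Omega^{(\tau)}$ and $\psi(n)$ is positive and non-increasing with $\sum_n \psi(n)^\tau = \infty$, condition (\ref{nec-suf-cond}) holds. For notational convenience, I would set $A_k := \sum_{q_k \leq n < q_{k+1}} \min(\psi(n),\|q_k\theta\|)$ and $B_k := \sum_{q_k \leq n < q_{k+1}} \psi(n)^\tau$. Since $\{[q_k,q_{k+1})\}_{k \geq 0}$ partitions $\mathbb N$, the hypothesis yields $\sum_k B_k = \infty$, and the goal becomes $\sum_k A_k = \infty$.

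First I would recycle the dichotomy used in the proof of Lemma~\ref{main-thm-conv}. If $\psi(q_{k+1}-1) \geq \|q_k\theta\|$ for infinitely many $k$, the computation carried out there already delivers $A_{k-1}+A_k \geq 1/4$ for each such $k$, and (\ref{nec-suf-cond}) follows immediately. Otherwise, one has $\psi(q_k-1) < \|q_{k-1}\theta\|$ for all $k \geq k_0$, and combining this with $\|q_{k-1}\theta\| \leq 1/q_k$ gives the crucial pointwise bound $\psi(q_k) < 1/q_k$ for $k \geq k_0$. This uniform control over $q_k \psi(q_k)$ is the extra ingredient needed to push the warm-up argument through for general $\tau \geq 1$.

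The heart of the proof is then the block comparison $B_k \leq (1+1/c)\, A_k$ for $k \geq k_0$, obtained by splitting $[q_k,q_{k+1})$ at the threshold where $\psi(n)$ crosses $\|q_k\theta\|$. For indices with $\psi(n) \leq \|q_k\theta\|$, the inequality $\psi(n)^\tau \leq \psi(n)\,\|q_k\theta\|^{\tau-1} \leq \psi(n)$, which is valid since $\tau \geq 1$ and $\|q_k\theta\| \leq 1$, shows that these indices contribute at most $A_k$ to $B_k$. For indices with $\psi(n) > \|q_k\theta\|$, combining $\psi(n) \leq \psi(q_k) < 1/q_k$ with the hypothesis $\|q_k\theta\| \geq c/q_k^\tau$ gives $\psi(n)^\tau \leq 1/q_k^\tau \leq \|q_k\theta\|/c$; since $A_k$ collects a term $\|q_k\theta\|$ for each such $n$, the total contribution here is at most $A_k/c$. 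Summing from $k_0$ onward then produces $\sum_{k \geq k_0} A_k \geq (1+1/c)^{-1}\sum_{k \geq k_0} B_k = \infty$, which is what was needed. The key obstacle is establishing the bound $q_k\psi(q_k) < 1$; without it the ``large'' portion of each block is uncontrolled, and it is precisely the reduction step borrowed from the convergence proof that delivers this bound.
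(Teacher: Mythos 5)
Your proposal is correct and follows essentially the same route as the paper: the same dichotomy borrowed from the convergence lemma, the same chain $\psi(n)^{\tau}\leq\psi(q_k-1)^{\tau}<\|q_{k-1}\theta\|^{\tau}\leq q_k^{-\tau}\leq\|q_k\theta\|/c$, and the same blockwise comparison of $\sum\min(\psi(n),\|q_k\theta\|)$ with $\sum\psi(n)^{\tau}$ (the paper packages your two cases into the single inequality $\min(\psi(n),\|q_k\theta\|)\geq\min(c,1)\,\psi(n)^{\tau}$, yielding the constant $\min(c,1)$ instead of $(1+1/c)^{-1}$). No substantive differences.
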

\begin{proof}
First, note that with the same argument as in the proof of Lemma \ref{main-thm-conv}, the claim holds when $\psi(q_{k+1}-1)\geq\|q_k\theta\|$ for infinitely many $k$.
Thus, in the sequel, we may assume that $\psi(q_{k}-1)<\|q_{k-1}\theta\|$ for all $k\geq k_0\geq 1$.
Fix $q_{k}\leq n<q_{k+1}$. Then,
\[
\psi(n)^{\tau}\leq\psi(q_k-1)^{\tau}<\|q_{k-1}\theta\|^{\tau}\leq\frac{1}{q_k^{\tau}}\leq\frac{\|q_k\theta\|}{c},
\]
where $c>0$ is such that $\|n\theta\|\geq c/n^{\tau}$ for all $n\geq 1$. Thus, we have
\[
\sum_{n=q_k}^{q_{k+1}-1}\min(\psi(n),\|q_k\theta\|)\geq\min(c,1) \cdot \sum_{n=q_k}^{q_{k+1}-1}\psi(n)^{\tau}.
\]
Summing over $k\geq k_0$ gives
\[
\sum_{k\geq k_0}\sum_{n=q_k}^{q_{k+1}-1}\min(\psi(n),\|q_k\theta\|)\geq\min(c,1) \cdot \sum_{n\geq q_{k_0}}\psi(n)^{\tau}=\infty
\]
which proves the claim also in this case.
\end{proof}

We next show that Theorem \ref{main-thm} also implies one direction of Theorem \ref{Kurz2}.
\begin{lemma}
We have,
\[
\Omega^{(\varphi)}\subseteq\Xi^{(\varphi)}.
\]
\end{lemma}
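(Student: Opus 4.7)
The plan mirrors the two preceding lemmas. By Theorem \ref{main-thm}, the assertion reduces to proving
\[
\sum_{k\ge 0}\sum_{n=q_k}^{q_{k+1}-1}\min(\psi(n),\|q_k\theta\|)=\infty
\]
for every $\psi$ satisfying the Kurzweil-type hypothesis (\ref{series-kurz-3a})--(\ref{series-kurz-3}). Arguing as in the opening of Lemma \ref{main-thm-conv}, we may assume $\psi(q_k-1)<\|q_{k-1}\theta\|$ for all sufficiently large $k$, since otherwise the sum is immediately infinite.

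The Diophantine hypothesis $\theta\in\Omega^{(\varphi)}$ provides a constant $c>0$ with $\|n\theta\|\ge n\varphi(cn)$ for all $n\ge 1$. Combined with the fact that $m\varphi(m)$ is non-increasing, this yields for every $n\in[q_k,q_{k+1})$
\[
\|q_k\theta\|\ge q_k\varphi(cq_k)\ge n\varphi(cn),
\]
so it is enough to establish the one-index estimate
\[
\sum_{n\ge 1}\min(\psi(n),n\varphi(cn))=\infty.
\]
This transfers the whole problem to a statement purely about the two sequences $\psi$ and $\varphi$, stripped of the particular arithmetic of the $q_k$'s.

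Set $N_i:=\lfloor 1/(t_i\varphi(t_i\delta(t_i)))\rfloor$. Using the monotonicity of $m\varphi(m)$, $m^2\varphi(m)\le 1$, and $\delta(t_i)\to\infty$, I would first verify the a priori bounds $N_i\ge t_i\delta(t_i)$, $t_{i+1}\ge N_i$, and $N_i/t_i\to\infty$. These make the intervals $I_i:=(t_i,N_i]$ pairwise disjoint (for $i$ beyond some $i_0$) with $|I_i|\gtrsim t_i\delta(t_i)$. On $I_i$ one has $\psi(n)\ge\psi(N_i)$, and again by $m\varphi(m)$ non-increasing, $n\varphi(cn)\ge N_i\varphi(cN_i)$; therefore
\[
\sum_{n\in I_i}\min(\psi(n),n\varphi(cn))\;\ge\;|I_i|\cdot\min(\psi(N_i),N_i\varphi(cN_i)).
\]
In the sub-case $\psi(N_i)\le N_i\varphi(cN_i)$, this immediately yields $\sum_{n\in I_i}\min\gtrsim t_i\delta(t_i)\psi(N_i)\ge t_i\psi(N_i)$, and summing over $i$ together with the disjointness of the $I_i$ and the hypothesis $\sum_i t_i\psi(N_i)=\infty$ gives the divergence.

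The main obstacle is the opposite sub-case $\psi(N_i)>N_i\varphi(cN_i)$, where the pointwise estimate above only delivers $N_i\varphi(cN_i)$, not $\psi(N_i)$. I would handle this case globally rather than one $i$ at a time: exploiting the defining asymptotic $N_it_i\varphi(t_i\delta(t_i))\asymp 1$, the bound $N_i\varphi(cN_i)\lesssim\delta(t_i)/N_i$ (derived from $m\varphi(m)$ non-increasing), and the disjointness of the $I_i$, one telescopes the contributions $t_i\psi(N_i)$ against the genuine partial sums of $\min(\psi(n),n\varphi(cn))$ over $(N_{i-1},N_i]$, absorbing each ``bad'' $i$ against the next block using $N_i/N_{i-1}\to\infty$. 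This case analysis is the technical heart of the proof and is essentially the one that Kurzweil carried out in \cite{Kurzweil} when proving Theorem \ref{Kurz2} directly; the one-index reduction above is what simplifies it in our setting.
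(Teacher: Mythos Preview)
Your reduction to the one-index sum $\sum_{n}\min(\psi(n),n\varphi(cn))$ discards too much. Once you replace $\|q_k\theta\|$ by the pointwise bound $n\varphi(cn)$, every trace of the particular $\theta$ is gone, and in particular you can no longer exploit the reduction hypothesis $\psi(q_k-1)<\|q_{k-1}\theta\|$, which you used only to set up the dichotomy and then abandoned. The ``bad'' sub-case $\psi(N_i)>N_i\varphi(cN_i)$ is precisely where this matters: your telescoping sketch is not a proof, and deferring to Kurzweil's original argument is against the spirit of the lemma, whose purpose is to \emph{derive} Theorem~\ref{Kurz2} from Theorem~\ref{main-thm}. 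As a warning sign, whenever $\sum_n n\varphi(cn)<\infty$ (for instance $\varphi(n)=1/(n^2\log^2 n)$, which satisfies (P1)--(P2)), the one-index sum is finite for \emph{every} $\psi$; so along your route one would first have to show that no $\psi$ can satisfy both the reduction and Kurzweil hypotheses for such $\varphi$ --- a statement you have not touched.

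The paper avoids the case split altogether by keeping the $q_k$-structure and using the reduction hypothesis a \emph{second} time. One sums over blocks $[t_{i-1},t_i)$ rather than $(t_i,N_i]$ and introduces the index $i_s$ with $q_{i_s-1}<t_i\le q_{i_s}$. The Diophantine hypothesis together with $\|q_{i_s-1}\theta\|\le 1/q_{i_s}$, property (P1), and $\delta\to\infty$ gives $q_{i_s}\le N_i$ for large $i$; then the reduction hypothesis at level $i_s$ yields $\|q_{i_s-1}\theta\|>\psi(q_{i_s}-1)\ge\psi(N_i)$. For any $n\in[t_{i-1},t_i)$ with $q_k\le n<q_{k+1}$ one has $k\le i_s-1$, hence $\|q_k\theta\|\ge\|q_{i_s-1}\theta\|>\psi(N_i)$, while trivially $\psi(n)\ge\psi(N_i)$ since $n<t_i\le N_i$. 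Thus $\min(\psi(n),\|q_k\theta\|)\ge\psi(N_i)$ on the whole block, contributing at least $(t_i-t_{i-1})\psi(N_i)\ge\tfrac12 t_i\psi(N_i)$ for large $i$, and \eqref{series-kurz-3} finishes the proof directly --- no sub-cases, no telescoping.
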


\begin{proof}
First note that as above, we can assume that $\psi(q_{k}-1)<\|q_{k-1}\theta\|$ for all $k$ large enough.

Now, consider $t_{i-1}\leq n<t_i$. Observe that by $(\ref{prop-2})$ and the assumption of $\delta(n)$, we have
\begin{equation}\label{est-ti}
\frac{1}{t_i\varphi(t_i\delta(t_i))}\geq t_i\delta^{2}(t_i)\geq t_i.
\end{equation}
Thus,
\begin{equation}\label{est-1}
\psi(n)\geq\psi\left(\left\lfloor\frac{1}{t_i\varphi(t_i\delta(t_i))}\right\rfloor\right).
\end{equation}

Next, define $i_s$ such that
\[
q_{i_s-1}<t_i\leq q_{i_s}.
\]
Note that from the assumptions of $\varphi(n)$ and the properties of principal convergents stated at the beginning of the proof of Lemma \ref{main-thm-conv}, we have
\[
q_{i_s-1}\varphi(cq_{i_s-1})\leq\|q_{i_s-1}\theta\|\leq\frac{1}{q_{i_s}}.
\]
From this and (\ref{prop-1}), we obtain that
\[
q_{i_s}\leq\frac{1}{q_{i_s-1}\varphi(cq_{i_s-1})}\leq\frac{c}{t_i\delta(t_i)\varphi(t_i\delta(t_i))}
\leq\frac{1}{t_i\varphi(t_i\delta(t_i))}
\]
for $i$ large enough. Thus, for $q_k\leq n<q_{k+1}$, we have
\[
\|q_k\theta\|\geq\|q_{i_s-1}\theta\|>\psi(q_{i_s}-1)\geq\psi(q_{i_s})\geq
\psi\left(\left\lfloor\frac{1}{t_i\varphi(t_i\delta(t_i))}\right\rfloor\right).
\]

Combining the latter with (\ref{est-1}) yields for $n$ with $t_{i-1}\leq n<t_i$ in the series of (\ref{nec-suf-cond}) the lower bound
\[
(t_i-t_{i-1})\psi\left(\left\lfloor\frac{1}{t_i\varphi(t_i\delta(t_i))}\right\rfloor\right)
\]
for $i$ large enough. Since, from \eqref{series-kurz-3a} and \eqref{est-ti}, we have
\[
t_i\geq t_{i-1}\delta^2(t_{i-1})\geq 2t_{i-1}
\]
for $i$ large enough, we see that a remainder of the series in (\ref{nec-suf-cond}) has a remainder of the series in (\ref{series-kurz-3}) as lower bound which proves the desired result.
\end{proof}

\subsection{$\theta\not\in\Omega^{(\star)}$.} Here, we have to show that there exists a positive, non-increasing sequence $\psi(n)$ satisfying $\sum_{n\geq 1}\psi(n)^{\tau}=\infty$ in case of Tseng's theorem and the condition above Theorem \ref{Kurz2} in case of Theorem \ref{Kurz2} such that (\ref{inf-sol}) does not hold. Such sequences have been constructed already by Tseng and Kurzweil in the proof of their results. One only has to check that these sequences do not satisfy (\ref{nec-suf-cond}). Since the check is the same for all of them, we only give details for Tseng's construction which we recall next.

First, since $\theta\not\in\Omega^{(\tau)}$, there exists a sequence of positive integers $v_{\ell}$ with $v_{\ell+1}\geq 2v_{\ell}$ and
\begin{equation}\label{conv-step-1}
\|v_{\ell}\theta\|\leq\frac{1}{2\ell^{2\tau+2}v_{\ell}^{\tau}}.
\end{equation}
Now, set $u_{\ell}=\lfloor\ell^{2\tau}v_{\ell}^{\tau}\rfloor$ and for $u_{\ell}\leq n<u_{\ell+1}$,
\[
\psi(n)=2^{-1}(\ell+1)^{-2}v_{\ell+1}^{-1}.
\]
Obviously,
\[
\sum_{n=u_{\ell}}^{u_{\ell+1}-1}\psi(n)^{\tau}\geq c
\]
for some constant $c$ and hence $\sum_{n\geq 1}\psi(n)^{\tau}=\infty$. Next, in order to show that (\ref{nec-suf-cond}) does not hold, we set for $q_k\leq n<q_{k+1}$
\[
h(n)=\min\{\psi(n),\|q_k\theta\|\}.
\]
%Then, for $u_{\ell}\leq n<u_{\ell+1}$,
%\[
%h(n)=\min\{2^{-1}(\ell+1)^{-2}v_{\ell+1}^{-1},\|q_k\theta\|\}.
%\]
%(Note that here $k$ is a function of $n$).
Thus,
\begin{align*}
\sum_{n=u_{\ell}}^{u_{\ell+1}-1} h(n) =\sum_{n=u_{\ell}}^{v_{\ell+1}-1} h(n)+ \sum_{n=v_{\ell+1}}^{u_{\ell+1}-1} h(n)
&\leq v_{\ell+1} \psi(u_\ell)  + u_{\ell+1}  \|v_{\ell+1} \theta \| \\
&\leq \frac{v_{\ell+1}}{2(\ell+1)^2v_{\ell+1}} +\frac{u_{\ell+1}}{2(\ell+1)^{2\tau+2}v_{\ell+1}^{\tau}}\leq\frac{1}{(\ell+1)^2},
\end{align*}
where we used (\ref{conv-step-1}) in the above estimate. Summing over $\ell$ shows that (\ref{nec-suf-cond}) does not hold as required.

\section{Khintchine Sequences}\label{Kim}

In this section, we assume that $\psi(n)$ is a Khintchine sequence, i.e., $\psi(n)=1/(n\phi(n))$ with $\phi(n)$ non-decreasing. For this special case, the second author proved in \cite{Kim} the following result.
\begin{theorem}[Kim \cite{Kim}]\label{Khintchine}
Let $\phi(n)$ be a positive, non-decreasing sequence which tends to infinity and $\theta$ be an irrational number with principal convergents $p_k/q_k$. Then, for almost all $s\in{\mathbb R}$,
\[
\|n\theta-s\|<\frac{1}{n\phi(n)}\quad\text{for infinitely many}\ n\in{\mathbb N}
\]
if and only if
\[
\sum_{k=0}^{\infty}\frac{\log(\min(\phi(q_k),q_{k+1}/q_k))}{\phi(q_k)}=\infty.
\]
\end{theorem}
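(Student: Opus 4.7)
Given Theorem \ref{main-thm}, it suffices to show that, for a Khintchine sequence $\psi(n) = 1/(n\phi(n))$, the condition \eqref{nec-suf-cond} is equivalent to Kim's sum condition in the statement. Write
\[
s_k := \sum_{n=q_k}^{q_{k+1}-1}\min\!\left(\psi(n),\,\|q_k\theta\|\right),\qquad t_k := \frac{\log\!\bigl(\min(\phi(q_k),\,q_{k+1}/q_k)\bigr)}{\phi(q_k)},
\]
so the goal becomes $\sum_k s_k = \infty \iff \sum_k t_k = \infty$. Using $\|q_k\theta\| \asymp 1/q_{k+1}$ and the monotonicities of $\psi$ and of $n\phi(n)$ (the latter from the Khintchine hypothesis), I would split each $k$ into Case A, where $\phi(q_k) \ge q_{k+1}/q_k$ and $\psi(n) \le \|q_k\theta\|$ throughout $[q_k, q_{k+1})$, giving $s_k = \sum_{n=q_k}^{q_{k+1}-1}1/(n\phi(n))$ and $t_k = \log(q_{k+1}/q_k)/\phi(q_k)$; and Case B, where $\phi(q_k) < q_{k+1}/q_k$ and a unique threshold $N_k \in (q_k, q_{k+1}]$ with $N_k\phi(N_k) \asymp q_{k+1}$ splits the sum as $s_k \asymp (N_k-q_k)/q_{k+1} + \sum_{n=N_k}^{q_{k+1}-1}1/(n\phi(n))$, with $t_k = \log\phi(q_k)/\phi(q_k)$.

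For the easy direction $\sum_k t_k < \infty \Rightarrow \sum_k s_k < \infty$, I would prove the term-by-term bound $s_k \lesssim t_k$. In Case A this follows from $\phi(n) \ge \phi(q_k)$. In Case B the first piece is at most $N_k/q_{k+1} = 1/\phi(N_k)$ and the second at most $\log(q_{k+1}/N_k)/\phi(N_k) = \log\phi(N_k)/\phi(N_k)$; summing and using that $x \mapsto (1+\log x)/x$ is decreasing together with $\phi(N_k) \ge \phi(q_k)$ yields $s_k \lesssim (1+\log\phi(q_k))/\phi(q_k) \asymp t_k$.

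For the converse $\sum_k t_k = \infty \Rightarrow \sum_k s_k = \infty$, the corresponding lower bound $s_k \gtrsim t_k$ can fail whenever $\phi$ has a large jump inside $[q_k, q_{k+1})$. To handle this, I plan to partition the indices into blocks $B_j := \{k : \phi(q_k) \in [2^j, 2^{j+1})\}$. If $k \in B_j$ is not the largest element of $B_j$, then $k+1 \in B_j$ forces $\phi(q_{k+1}) < 2\phi(q_k)$, hence $\phi(n) \le 2\phi(q_k)$ on all of $[q_k, q_{k+1})$; plugging this into Case A or Case B gives $s_k \gtrsim t_k$. The at-most-one exceptional index per block (the maximum of $B_j$, where $\phi$ may jump) contributes at most $t_k \le \log(2^{j+1})/2^j$, and the sum over $j$ of these upper bounds is finite. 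Hence if $\sum_k t_k = \infty$, the divergence is carried by non-exceptional indices, and the term-by-term lower bound forces $\sum_k s_k = \infty$.

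The principal obstacle is this converse direction: a direct term-by-term comparison fails because a sharp jump of $\phi$ inside a single interval $[q_k, q_{k+1})$ can drive $s_k$ well below $t_k$. The block decomposition above exploits the global non-decreasing character of $\phi$ to confine such failures to a sparse set of exceptional indices whose total contribution to $\sum_k t_k$ is summable, so that these indices can be discarded without affecting divergence.
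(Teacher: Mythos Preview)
Your argument is correct and follows essentially the same route as the paper: introduce the crossover index where $\psi(n)$ drops below $\|q_k\theta\|$ (your $N_k$, the paper's $q_k^*$), estimate the two resulting pieces by integrals to obtain the $\log(\cdot)/\phi$ shape, and then deal with the failure of the pointwise lower bound by a dyadic control on the values of $\phi$. Your block decomposition $B_j=\{k:\phi(q_k)\in[2^j,2^{j+1})\}$ is a mild repackaging of the paper's set $\Lambda=\{k:\phi(q_{k+1})\le 2\phi(q_k)\}$; in both cases the point is that the indices where $\phi$ more than doubles across $[q_k,q_{k+1}]$ contribute only $\sum_j j2^{-j}<\infty$ to $\sum_k t_k$ and can therefore be discarded.
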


\begin{Rem}
By using the main result of \cite{Kim2} and replacing $\log x$ by ${\rm Log}\ x:=\max\{\log x,0\}$, the assumption that $\phi(n)$ tends to infinity can be dropped (if $\phi(n)$ is bounded, then there are always an infinite number of solutions). Moreover, this can also be obtained by Minkowski's inhomogeneous approximation theorem (e.g., \cite[p.48]{Cassels}) and Cassels' lemma \cite[Lemma 2.1]{Harman}. Note that this situation is also covered by our main result. More precisely, if $\phi(n)\leq c$ for some $c\geq 2$, then we have
\begin{align*}
\sum_{k=0}^{\infty}\left(\sum_{n=q_k}^{q_{k+1}-1}\min\left(\frac{1}{n\phi(n)},\| q_k\theta\|\right)\right)
&\ge \sum_{k=0}^{\infty}\left(\sum_{n=q_{2k}}^{q_{2k+2}-1}\min \left(\frac{1}{cn},\| q_{2k+1} \theta\| \right)\right) \\
%> \sum_{k=0}^{\infty}\left(\sum_{n=q_{2k}}^{q_{2k+2}-1}\min \left( \frac1{cn}, \frac 1{2q_{2k+2}} \right)\right) \\
&\geq\sum_{k=0}^{\infty}\left(\sum_{n=q_{2k}}^{q_{2k+2}-1}\frac 1{cq_{2k+2}} \right)
= \sum_{k=0}^{\infty} \frac{q_{2k+2}-q_{2k}}{cq_{2k+2}}
\geq\sum_{k=0}^{\infty} \frac{1}{2c} = \infty.
\end{align*}
\end{Rem}

Theorem~\ref{Khintchine} is indeed a special case of our Theorem \ref{main-thm} since the following proposition holds.
\begin{proposition}
With the assumptions of the above theorem, we have
\[
\sum_{k=0}^{\infty}\frac{\log(\min(\phi(q_k),q_{k+1}/q_k))}{\phi(q_k)}=\infty.
\]
if and only if
\[
\sum_{k=0}^{\infty}\left(\sum_{n=q_k}^{q_{k+1}-1}\min(\psi(n),\| q_k\theta\|)\right)=\infty,
\]
where $\psi(n)=1/(n\phi(n))$.
\end{proposition}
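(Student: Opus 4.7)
The plan is to show that both series converge or diverge simultaneously, by comparing each term of Fuchs--Kim's series to the corresponding term of Kim's. Set $\phi_k := \phi(q_k)$, $\alpha_k := \|q_k\theta\|$ (so that $\tfrac{1}{2q_{k+1}} \le \alpha_k \le \tfrac{1}{q_{k+1}}$), $\beta_k := \min(\phi_k, q_{k+1}/q_k)$, $T_k := \log(\beta_k)/\phi_k$ (Kim's term), and $S_k := \sum_{n=q_k}^{q_{k+1}-1} \min(1/(n\phi(n)), \alpha_k)$ (Fuchs--Kim's term). A useful preliminary is that $q_{k+1} \ge 2 q_{k-1}$ forces $q_k$ to grow at least geometrically, so $\sum_k 1/q_k < \infty$ and any additive error of order $O(1/q_k)$ is harmless.

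For the direction $\sum T_k < \infty \Rightarrow \sum S_k < \infty$, I would bound $S_k$ from above using the monotonicity $\phi(n) \ge \phi_k$ to replace $\phi(n)$ with $\phi_k$. The resulting sum $\sum_n \min(1/(n\phi_k), 1/q_{k+1})$ splits at the transition index $n^* = \lceil q_{k+1}/\phi_k \rceil$, and an elementary integral comparison gives, in the regime $\phi_k \ge q_{k+1}/q_k$, the value $\log(q_{k+1}/q_k)/\phi_k + O(1/(q_k\phi_k))$, and in the regime $\phi_k < q_{k+1}/q_k$, a bound of at most $(\log\phi_k + 1)/\phi_k$. In both cases this gives $S_k \le C T_k + O(1/q_k)$ for all sufficiently large $k$, and summing yields the convergence.

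The reverse implication $\sum T_k = \infty \Rightarrow \sum S_k = \infty$ is the more delicate one, because $\phi$ could jump wildly inside $[q_k, q_{k+1})$ and make $S_k$ much smaller than $T_k$. The key device is to split the indices into \emph{slow} ones, where $\phi_{k+1} \le 2\phi_k$, and \emph{fast} ones, where $\phi_{k+1} > 2\phi_k$. If $k_1 < k_2 < \dots$ enumerate the fast indices, then $\phi_{k_{i+1}} \ge \phi_{k_i+1} \ge 2\phi_{k_i}$, giving $\phi_{k_i} \ge 2^{i-1}\phi_{k_1}$; since $\log(x)/x$ is eventually decreasing, $T_{k_i} \le \log(\phi_{k_i})/\phi_{k_i} \le \log(2^{i-1}\phi_{k_1})/(2^{i-1}\phi_{k_1})$, whose sum over $i$ converges. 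Hence any divergence of $\sum_k T_k$ must come from the slow indices. On slow indices, $\phi(n) \le \phi_{k+1} \le 2\phi_k$ for all $n \le q_{k+1}$, so $\min(1/(n\phi(n)),\alpha_k) \ge \tfrac{1}{2}\min(1/(n\phi_k),\alpha_k)$ (using $\min(ca,b) \ge c\min(a,b)$ for $0<c\le 1$), and the same integral analysis as above gives $S_k \ge c' T_k - O(1/q_k)$. Summing over the slow indices yields the desired divergence of $\sum_k S_k$.

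The principal obstacle is that the monotone bound $\phi(n) \ge \phi_k$ goes only one way; the opposite bound $\phi(n) \le \phi_{k+1}$ forces the uncontrolled factor $\phi_{k+1}/\phi_k$ into the lower estimate of $S_k$. The slow/fast dichotomy is precisely what neutralises this difficulty: either the ratio is bounded and the term-by-term comparison survives (up to a factor of $2$), or the ratio is large and the rapid growth of $\phi_k$ itself guarantees that the contribution of those indices to Kim's series is already summable, so nothing is lost by discarding them.
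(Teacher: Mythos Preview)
Your proposal is correct and follows essentially the same strategy as the paper: both arguments reduce $S_k$ to an integral of $1/(x\phi)$ over $[q_k,q_{k+1}]$, locate the crossover where $\psi(n)$ drops below $\|q_k\theta\|$, and handle the mismatch between $\phi(q_k)$ and $\phi(q_{k+1})$ via the slow/fast dichotomy $\Lambda=\{k:\phi(q_{k+1})\le 2\phi(q_k)\}$, using that on the complement $\phi(q_k)$ grows geometrically so $\sum\log\phi(q_k)/\phi(q_k)<\infty$. The only organisational difference is that the paper keeps the genuine $\phi$ inside the integral (obtaining a lower bound with $\phi(q_{k+1})$ in the denominator and invoking the dichotomy afterwards to pass back to $\phi(q_k)$), whereas you invoke the dichotomy first and replace $\phi(n)$ by the constant $2\phi(q_k)$ on slow indices before integrating; this makes your computation a bit more streamlined but is not a genuinely different idea.
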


\begin{proof}
Since we assume that $\phi(n)$ tends to infinity, $\psi(q_{k+1} -1) < \|q_k \theta \|$ for large enough $k$ as before.
For such a large $k$ let $$q^*_k = \min \{ q_k \le n < q_{k+1} : \psi(n) < \| q_k \theta \| \}.$$
Then, we have
\begin{equation*}\label{eq4-1}
\frac{1}{q^*_k} <  \phi (q^*_k) \| q_k \theta \|\leq\frac{\phi (q^*_k)}{q_{k+1}}
\end{equation*}
and if $q^*_k \ge q_k +1$, then
\begin{equation*}\label{eq4-2}
\frac{2}{q^*_k} \ge \frac{1}{q^*_k - 1} \ge \phi(q^*_k -1 ) \| q_k \theta \| \ge \phi(q_k) \| q_k \theta \|\geq \frac{\phi(q_k)}{2q_{k+1}}.
\end{equation*}
Therefore, we have that
$$ \min \left\{ \frac{q_{k+1}}{q_k},  \frac{\phi(q_k)}{4} \right\}\leq \frac{q_{k+1}}{q^*_k} \le \min \left\{ \frac{q_{k+1}}{q_k},  \phi(q^*_k) \right\}$$
and if $q^*_k \ge q_k +1$ and $\phi(q_k) \ge 2e$, then
$$
(q^*_k - q_k) \| q_k \theta \| \le (q^*_k - 1) \| q_k \theta \| \le \frac{1}{\phi(q_k)} \le \frac{\log (\phi(q_k)/2)}{\phi(q_k)} \le \frac{\log ( q_{k+1} \| q_k \theta\| \phi(q_k) )}{\phi(q_k)} \le \frac{\log (q_{k+1}/q_k)}{\phi(q_k)}.
$$

%Then $q^*_k < q_{k+1}$ and

If we consider $\phi$ as a function on $\mathbb R$, then for large $k$ such that $\phi(q_k)\geq 16$, we have
\begin{align*}
\sum_{n=q_k}^{q_{k+1}-1}\min(\psi(n),\| q_k\theta\|)
&\ge \sum_{n=q^*_k}^{q_{k+1}-1} \frac{1}{n\phi(n)} \ge \int_{q^*_k}^{q_{k+1}} \frac{dx}{x \phi(x)} = \int_{\log q^*_k}^{\log q_{k+1}} \frac{dt}{\phi(e^t)} \ge \frac{\log (q_{k+1}/q^*_k)}{\phi(q_{k+1})} \\
& \geq \frac{\log ( \min (\phi(q_k)/4, q_{k+1}/q_k))}{\phi(q_{k+1})}
\geq \frac{\log ( \min (\phi(q_k), q_{k+1}/q_k))}{2 \phi(q_{k+1})}
\end{align*}
and for $\phi(q_k) \ge 2e$
\begin{align*}
\sum_{n=q_k}^{q_{k+1}-1}\min(\psi(n),\| q_k\theta\|)
&= (q^*_{k} - q_k) \| q_k \theta \| + \frac{1}{q^*_k\phi(q^*_k)} + \sum_{n=q^*_k +1}^{q_{k+1}-1} \frac{1}{n\phi(n)} \\
&\le \frac{\log (\min (\phi(q_k), q_{k+1}/q_k))}{\phi(q_k)} + \frac{1}{q_k\phi(q_k)} + \int_{q^*_k}^{q_{k+1}} \frac{dx}{x \phi(x)} \\
%= \frac{2}{\phi(q_k)} + \int_{\log q^*_k}^{\log q_{k+1}} \frac{dt}{\phi(e^t)} \\&
&\le \frac{\log (\min (\phi(q_k), q_{k+1}/q_k))}{\phi(q_k)} + \int_{q^*_{k-1}}^{q_k}\frac{dx}{x \phi(x)}  + \frac{\log ( q_{k+1}/q^*_k)}{\phi(q^*_k)} \\
&\le \frac{\log (\min (\phi(q_k), q_{k+1}/q_k))}{\phi(q_k)} + \frac{\log ( q_{k}/q^*_{k-1})}{\phi(q^*_{k-1})} + \frac{\log (\min (\phi(q^*_k), q_{k+1}/q_k))}{\phi(q^*_k)} \\
&\le \frac{ 2 \log (\min (\phi(q_k), q_{k+1}/q_k))}{\phi(q_k)} + \frac{\log (\min (\phi(q_{k-1}), q_{k}/q_{k-1}))}{\phi(q_{k-1})}.
\end{align*}
Therefore, for some $k_0\geq 1$, we have
$$
\sum_{k>k_0}\frac{\log(\min(\phi(q_k),q_{k+1}/q_k))}{2\phi(q_{k+1})} \le
\sum_{k>k_0}\sum_{n=q_k}^{q_{k+1}-1}\min(\psi(n),\| q_k\theta\|) \le \sum_{k\geq k_0}^{\infty}\frac{3\log(\min(\phi(q_k),q_{k+1}/q_k))}{\phi(q_k)}.
$$

Let $\Lambda = \{ k \ge 1 : \phi(q_{k+1}) \le 2 \phi(q_k) \}.$ Then,
\[
\sum_{k \in \Lambda} \frac{\log(\min(\phi(q_k),q_{k+1}/q_k))}{\phi(q_{k+1})}
\le \sum_{k \in \Lambda} \frac{\log(\min(\phi(q_k),q_{k+1}/q_k))}{\phi(q_{k})}
\le 2 \sum_{k \in \Lambda} \frac{\log(\min(\phi(q_k),q_{k+1}/q_k))}{\phi(q_{k+1})}
\]
and
$$
\sum_{k \in \Lambda^c} \frac{\log(\min(\phi(q_k),q_{k+1}/q_k))}{\phi(q_{k+1})}
\le \sum_{k \in \Lambda^c} \frac{\log(\min(\phi(q_k),q_{k+1}/q_k))}{\phi(q_{k})}
\le \sum_{k \in \Lambda^c} \frac{\log \phi(q_k)}{\phi(q_{k})}< \infty.
$$
Hence, we have
\[
\sum_{k=0}^{\infty}\frac{\log(\min(\phi(q_k),q_{k+1}/q_k))}{\phi(q_{k+1})}=\infty
\ \text{ if and only if } \
\sum_{k=0}^{\infty}\frac{\log(\min(\phi(q_k),q_{k+1}/q_k))}{\phi(q_k)}=\infty,
\]
which completes the proof.
\end{proof}

\section{An Analogue in the Field of Formal Laurent Series}\label{formal-Laur}

In this section, we will briefly discuss an analogue of our Theorem \ref{main-thm} in the field of formal Laurent series. As in the real case, this analogue will imply the analogues of Kurzweil's theorem and its extensions as well as the analogue of Kim's theorem \cite{Kim} which have all been established in the field of formal Laurent series.

We start by recalling the definition of the field of formal Laurent series; for further details see \cite{Fuchs}. First, denote by ${\mathbb F}_q$ the finite field of $q$ elements, where $q$ is a prime power. Moreover, let ${\mathbb F}_q[X]$ be the polynomial ring over ${\mathbb F}_q$ and denote by ${\mathbb F}_q(X)$ its quotient field. The field of formal Laurent series is defined by
\[
{\mathbb F}_q((X^{-1})):=\{f=a_{n}X^{n}+a_{n-1}X^{n-1}+\cdots,\ a_i\in{\mathbb F}_q,\ a_n\ne 0, n\in{\mathbb Z}\}\cup\{0\}
\]
with addition and multiplication defined as for polynomials. We set $\{f\}=a_{-1}X^{-1}+\cdots$ which is called the {\it fractional part} of $f$. Moreover, we define a norm by setting $\vert f\vert=q^{\deg(f)}$, where $\deg(f)$ is the generalized degree function (by definition $\vert 0\vert:=0$). This norm is non-Archimedean. Next, set
\[
{\mathbb L}:=\{f\in{\mathbb F}_q((X^{-1}))\ :\ \vert f\vert<1\}.
\]
Restricting the norm to ${\mathbb L}$ gives a compact topological group. Thus, there exists a unique, translation-invariant probability measure (the Haar measure).

Metric Diophantine approximation is now done in ${\mathbb L}$ equipped with the above measure with integers replaced by elements in ${\mathbb F}_q[X]$ and real number replaced by elements in ${\mathbb L}$. In particular, the inhomogeneous Diophantine approximation problem (\ref{idap}) in this setting becomes
\[
\vert\{Qf\}-g\vert<\frac{1}{q^{l_n}}, \ \deg(Q) = n,
\]
where $f,g\in{\mathbb L}$ and solutions are sought in $Q\in{\mathbb F}_q[X]$. Here, $l_n$ is a non-negative sequence of integers which plays the role of the approximation sequence.

In this setting, our Theorem \ref{main-thm} reads as follows.
\begin{theorem}
Let $l_n$ be a non-decreasing sequence and $f\in{\mathbb L}$ be irrational with principal convergents $P_k/Q_k$.
Then, for almost all $g\in{\mathbb L}$,
\[
\vert\{Qf\}-g\vert <\frac{1}{q^{l_n}},\ \deg(Q)=n\quad \text{for infinitely many $Q\in{\mathbb F}_q[X]$}
\]
if and only if
\[
\sum_{k=0}^{\infty}\left(\sum_{n=n_k}^{n_{k+1}-1}q^{n-\max\{n_{k+1},l_n\}}\right)=\infty,
\]
where $n_k:=\deg(Q_k)$.
\end{theorem}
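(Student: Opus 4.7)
The plan is to adapt the proof of Theorem~\ref{main-thm} to the non-Archimedean setting of $\mathbb{L}$. Two features of $\mathbb{L}$ make the Laurent-series version cleaner than the real case: the identity $|Q_{k+1}|\cdot|\{Q_k f\}| = 1$ holds exactly, so $|\{Q_k f\}| = q^{-n_{k+1}}$; and the absolute value is ultrametric, so two balls of the same radius are either equal or disjoint. Writing $h(n) := q^{-\max(l_n, n_{k+1})}$ for $n_k \le n < n_{k+1}$, the hypothesis reads $\sum_k \sum_{n=n_k}^{n_{k+1}-1} q^n h(n) = \infty$, which one should read as \emph{number of polynomials at degree $n$ times effective radius}.

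For the convergence part I would mirror Lemma~\ref{main-thm-conv}. A warm-up case split handles the situation in which $q^{-l_{n_{k+1}-1}} \ge q^{-n_{k+1}}$ holds for infinitely many $k$: in that case the inner sum is bounded below by a constant (since $q^n h(n) \ge q^{n - n_{k+1}}$ when $l_n \le n_{k+1}$), forcing divergence. Hence one may assume $l_n \ge n_{k+1}$ for all large $k$ and all $n$ with $n_k \le n < n_{k+1}$. In this disjoint regime, the balls $B(\{Qf\}, q^{-l_n})$ at fixed degree $n$ are pairwise disjoint by ultrametricity (any two $\{Qf\}, \{Q'f\}$ with $\deg(Q-Q') \le n < n_{k+1}$ differ in norm by at least $|\{Q_k f\}| = q^{-n_{k+1}} \ge q^{-l_n}$, by the best approximation property of $Q_k$), so
\[
\mu(E_{k+1}) \;\le\; \sum_{n=n_k}^{n_{k+1}-1} (q-1)\, q^n \cdot q^{-l_n} \;\le\; (q-1) \sum_{n=n_k}^{n_{k+1}-1} q^n h(n),
\]
where $E_{k+1} := \bigcup_{n_k \le n < n_{k+1}} \bigcup_{\deg(Q)=n} B(\{Qf\}, q^{-l_n})$. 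The first Borel--Cantelli lemma then concludes.

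For the divergence part I would mirror Lemma~\ref{thm3} and the two lemmas that follow. For each $n$ with $n_k \le n < n_{k+1}$, pick a transversal $\mathcal{T}_{k,n}$ of polynomials of degree $n$ modulo the ultrametric equivalence $\{Qf\} \equiv \{Q'f\} \pmod{h(n)}$, and define
\[
G_k \;:=\; \bigcup_{n=n_k}^{n_{k+1}-1} \bigcup_{Q \in \mathcal{T}_{k,n}} B\bigl(\{Qf\},\, h(n)\bigr).
\]
By ultrametricity the constituent balls of $G_k$ are pairwise disjoint, and a telescoping argument analogous to the last lemma of Section~2 gives $\mu(G_k) = \Theta\bigl(\sum_{n=n_k}^{n_{k+1}-1} q^{n-\max(l_n, n_{k+1})}\bigr)$, hence $\sum_k \mu(G_k) = \infty$. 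The Denjoy--Koksma analogue needed to decorrelate $G_k$ from $G_\ell$ for $k > \ell$ is in fact \emph{exact} in the Laurent setting: for any ball $I \subseteq \mathbb{L}$,
\[
\#\bigl\{Q \in \mathbb{F}_q[X] : \deg(Q) < n_{k+1},\ \{Qf\} \in I \bigr\} \;=\; \bigl\lfloor q^{n_{k+1}}\mu(I)\bigr\rfloor \text{ or } \bigl\lceil q^{n_{k+1}}\mu(I)\bigr\rceil.
\]
This yields $\mu(G_k \cap G_\ell) \le \mu(G_k)\mu(G_\ell) + O\bigl(q^{n_{\ell+1}-n_{k+1}}\bigr)\mu(G_k)$, a bound at least as sharp as in the real case, and the extended Borel--Cantelli lemma (Lemma~\ref{Sp}) gives $\mu(\limsup_k G_k) = 1$. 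Since $G_k \subseteq \bigcup_{n_k \le n < n_{k+1}} \bigcup_{\deg(Q)=n} B(\{Qf\}, q^{-l_n})$, the conclusion follows.

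The main obstacle is the careful set-up of the transversals $\mathcal{T}_{k,n}$ so that the balls of $G_k$ are pairwise disjoint while collectively realizing the desired measure, together with the telescoping estimate relating $\mu(G_k)$ to the block sum in the hypothesis. Everything else is a direct translation of the real-case argument into the ultrametric setting, where the inequalities used there typically become equalities or strictly sharper bounds.
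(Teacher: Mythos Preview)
Your overall plan matches the paper's: the paper does not give a self-contained proof here, but simply cites \cite{Lin} for the divergence half and states that the convergence half ``can be proved with a similar line of reasoning'' to Lemma~\ref{main-thm-conv}. So structurally you are doing exactly what the paper intends.

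There is, however, a genuine gap in your convergence argument. From the negation of your warm-up case you only obtain $l_{n_{k+1}-1}\ge n_{k+1}+1$ for all large $k$; since $l$ is non-decreasing this says nothing about $l_n$ for $n$ near the \emph{bottom} of the block $[n_k,n_{k+1})$. In particular your assertion ``hence one may assume $l_n\ge n_{k+1}$ for all $n_k\le n<n_{k+1}$'' is unjustified, and without it your displayed bound $\mu(E_{k+1})\le (q-1)\sum q^{n-l_n}$ is \emph{not} controlled by $(q-1)\sum q^n h(n)$: on the sub-range where $l_n<n_{k+1}$ one has $q^{-l_n}>h(n)=q^{-n_{k+1}}$, so the naive union bound overshoots the hypothesis sum. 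This is exactly the situation the real-case proof handles with the ball-difference estimate $\mu\bigl(B(n\theta,\psi(n))\setminus B((n-q_k)\theta,\psi(n-q_k))\bigr)\le\|q_k\theta\|$ and the subsequent bootstrap $q_k\psi(q_k)\le 2\bigl(\text{two earlier block sums}\bigr)$; you need the Laurent analogue of that step (or, alternatively, use the exact equidistribution you already invoke in the divergence half to count how many of the $(q-1)q^n$ balls at level $n$ coalesce when $l_n<n_{k+1}$, which yields $\mu(U_n)\le q^{n+1}h(n)$ directly). Either route closes the gap, but the case split as you wrote it does not.

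Your divergence sketch is in line with what the paper (via \cite{Lin}) has in mind; the transversal construction and the telescoping comparison of $\mu(G_k)$ with the block sum are indeed the points requiring care, as you note.
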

We remark that under the stronger assumption that $l_n-n$ is non-decreasing this result was already proved in \cite{KiNaNa} (this corresponds to the case of a Khintchine sequence). Also, it was shown in \cite{Lin} that the divergence part holds without the monotonicity assumption. The convergence part (which was conjectured in \cite{Lin}) can be proved with a similar line of reasoning as above. It might be possible to remove also in this case the monotonicity assumption (as is frequently the case for metric Diophantine approximation in the field of formal Laurent series), but we will not pursue this here further.

We conclude by pointing out that the above theorem implies the analogue of Kurzweil's theorem in the field of formal Laurent series which was proved in \cite{ChenFuchs} and \cite{KimNakada}. Moreover, the analogue of Tseng's theorem (proved in \cite{KiTaWaXu}) and the analogue of Theorem \ref{Kurz2} (proved in \cite{Lin}) are also deduced from the above theorem with a similar line of reasoning as in Section \ref{Ku-Ts}. Finally, our result again extends the analogue of Kim's theorem for Khintchine sequences which was obtained in \cite{KiNaNa}. This was already proved in \cite{KiNaNa} and the reader is referred to that paper for details.

\section*{Acknowledgments}
The first author was partially supported by the Ministry of Science and Technology, Taiwan under the grant MOST-103-2115-M-009-007-MY2. The second author was supported by Basic Science Research Program through the National Research Foundation of Korea (NRF) funded by the Ministry of Education, Science and Technology (2012R1A1A2004473). Parts of this work was done while the second author visited the Department of Applied Mathematics, National Chiao Tung University. He thanks the Center of Mathematical Modeling and Scientific Computing (CMMSC) for partial financial support.

\end{document}